\newcommand{\Sym}{\mathrm{Sym}}
\newcommand{\Alt}{\mathrm{Alt}}
\newcommand{\Cos}{\mathop{\mathrm{Cos}}}
\renewcommand{\wr}{\mathop{\mathrm{wr}}}
\newtheorem{theorem}{Theorem}
\newtheorem{lemma}[theorem]{Lemma}
\newtheorem{corollary}[theorem]{Corollary}
\newtheorem{hyp}[theorem]{Hypothesis}
\newcommand{\Aut}{\mathop{\mathrm{Aut}}}
\begin{document}
\title[Local conditions and Sylow subgroups]{Two local conditions on the vertex stabiliser of arc-transitive graphs and their effect on the Sylow subgroups}

\author[P. Spiga]{Pablo Spiga}
\address{Pablo Spiga,  School of Mathematics and Statistics,\newline
The University of Western Australia,
 Crawley, WA 6009, Australia} \email{spiga@maths.uwa.edu.au}

\subjclass[2000]{05C25,20B25}
\keywords{primitive groups; quasiprimitive groups; Weiss Conjecture; Thompson-Wielandt theorems} 

\begin{abstract} In this paper we study $G$-arc-transitive graphs $\Delta$ where the permutation group $G_x^{\Delta(x)}$ induced by the stabiliser $G_x$ of the vertex $x$ on the neighbourhood $\Delta(x)$ satisfies the two conditions given in the introduction. We show that for such a  $G$-arc-transitive graph $\Delta$, if $(x,y)$ is an arc of $\Delta$, then the subgroup $G_{x,y}^{[1]}$ of $G$ fixing pointwise $\Delta(x)$ and $\Delta(y)$ is a $p$-group for some prime $p$. Next we prove that every $G$-locally primitive (respectively quasiprimitive, semiprimitive) graph satisfies our two local hypotheses. Thus this provides a new Thompson-Wielandt-like theorem for a very large class of arc-transitive graphs.

Furthermore, we give various families of $G$-arc-transitive graphs  where our two local conditions do not apply and where $G_{x,y}^{[1]}$ has arbitrarily large composition factors. 
\end{abstract}

\thanks{Address correspondence to P. Spiga: spiga@maths.uwa.edu.au\\ 
The author is supported by UWA as part of the  Australian Research Council Federation Fellowship Project  FF0776186.}

\maketitle
\section{Introduction}\label{sec:1}

A graph $\Delta$ is said to be $G$-\emph{vertex-transitive} if $G$ is a subgroup of $\Aut(\Delta)$ acting transitively on the vertex set $V\Delta$ of $\Delta$. Similarly,  $\Delta$ is said to be $G$-\emph{arc-transitive} if $G$ acts transitively on the arcs of $\Delta$, that is, on the ordered pairs of adjacent vertices of $\Delta$. We say that a $G$-vertex-transitive graph $\Delta$ is $G$-\emph{locally primitive} if the stabiliser $G_x$ of the vertex $x$ induces a primitive
permutation group on the set $\Delta(x)$ of vertices adjacent to $x$. In 1978 Richard Weiss~\cite{Weiss} conjectured that for a finite connected $G$-vertex-transitive, $G$-locally
primitive graph $\Delta$, the size of $G_x$ is bounded above by some function depending only on the valency of $\Delta$. This conjecture is very similar to the 1967 conjecture
of Charles Sims~\cite{Sims}, that  for a $G$-vertex-primitive graph or digraph $\Delta$, the size of the stabiliser of a vertex is bounded above by some function depending only on the valency of $\Delta$. Despite the fact that the Sims Conjecture has
been proved true in~\cite{CPSS}, the truth of the Weiss Conjecture is still unsettled and only partial results are known.

In $1998$ Cheryl Praeger~\cite{Cheryl} made a conjecture stronger than the Weiss conjecture.

\smallskip

\noindent\textbf{Praeger Conjecture. }{\em There exists a function $f : \mathbb{N} \to \mathbb{N}$ such that, if $\Delta$  is a connected $G$-vertex-transitive, $G$-locally quasiprimitive graph of valency $d$ and $x\in V\Delta$, then $|G_x|\leq f(d)$.}
\medskip

From the Weiss Conjecture to  the Praeger Conjecture the local action assumption is weakened from primitive to quasiprimitive. (We recall that a permutation group $L$ is \emph{quasiprimitive} if every non-identity normal subgroup of $L$ is transitive.) Furthermore, recently in~\cite{PSV} the following generalisation of both the Weiss and the Praeger Conjecture was made. (A finite permutation group $L$ is \emph{semiprimitive} if every normal subgroup of $L$ is either transitive or semiregular. We refer the reader to~\cite{BM,KS} for the original motivation for introducing and studying the semiprimitive groups, for some interesting examples and for some 
group theoretic results.)

\smallskip

\noindent\textbf{Conjecture~A.~\cite{PSV} }{\em There exists a function $f : \mathbb{N} \to \mathbb{N}$ such that, if $\Delta$  is a connected $G$-vertex-transitive, $G$-locally semiprimitive graph of valency $d$ and $x\in V\Delta$, then $|G_x|\leq f(d)$.}

\medskip

The proof of the Sims Conjecture uses the
Classification of Finite Simple Groups and  at a crucial stage an
important theorem of Thompson~\cite[Theorem]{Thompson} concerning the 
structure of the stabiliser of a point in a primitive group. A first
generalisation of the theorem of Thompson was obtained by
Wielandt~\cite[Theorem~$6.6$]{Wielandt}. In graph theoretic terminology, the Thompson-Wielandt theorem states that for a $G$-vertex-primitive digraph $\Delta$, if $(x,y)$ is an arc of $\Delta$, then the subgroup of $G$ fixing pointwise $\Delta(x)$ and $\Delta(y)$ is a $p$-group for some prime $p$. This result
 inspired much research and various generalisations
of the so called \emph{Thompson-Wielandt} theorems have been obtained by
many authors~\cite{G,K,VanBon,W}.

In this paper we obtain a new
Thompson-Wielandt theorem for a very large class of graphs (we refer to Section~\ref{sec:not} for undefined terminology). 

\begin{hyp}\label{hyp1}{\rm
Let $\Delta$ be a connected graph and $G$ a group of automorphisms of $\Delta$. Suppose that for each vertex $x$ of $\Delta$, the group $G_x^{\Delta(x)}$ is transitive on $\Delta(x)$ and satisfies the following two conditions:
\begin{description}
\item[$(i)$] $(C_{G_x}(G_x^{[1]}))^{\Delta(x)}$ is transitive or semiregular on $\Delta(x)$;
\item[$(ii)$] for each prime $p$, the group $\langle O_p(G_{x,y})\mid y\in \Delta(x)\rangle^{\Delta(x)}$ is transitive or semiregular on $\Delta(x)$.
\end{description}
Since for each $x\in V\Delta$ the group $G_x^{\Delta(x)}$ is transitive, we obtain that $G$ acts transitively on the edges of $\Delta$, that is, $\Delta$ is $G$-edge-transitive. We stress here that we do not require $\Delta$ to be $G$-arc-transitive.}
\end{hyp}

The conditions in Hypothesis~\ref{hyp1} are satisfied by many important classes of arc-transitive graphs. For example, as $C_{G_x}(G_x^{[1]})$ and $\langle O_p(G_{x,y})\mid y\in \Delta(x)\rangle$ are normal subgroups of $G_x$, we obtain that if $\Delta$ is $G$-locally primitive (or quasiprimitive, or semiprimitive), then $\Delta$ and $G$ satisfy Hypothesis~\ref{hyp1}. 

One of the main results of this paper is the following theorem.

\begin{theorem}\label{thm:main}
Let $\Delta$ and $G$ be as in Hypothesis~\ref{hyp1} and let $(x,y)$ 
be an arc of $\Delta$. Then either $G_{x,y}^{[1]}$ is a $p$-group, or
(interchanging $x$ and $y$ if necessary) $G_{x,y}^{[1]}=G_{x}^{[2]}$ and the
group $G_y^{[2]}=G_y^{[3]}$ is a $p$-group. Furthermore either $F^*(G_{x,y})$
is a $p$-group, or (interchanging $x$ and $y$ if necessary)
$G_{x,y}^{[1]}=G_x^{[2]}$ and $G_y^{[2]}=1$. 
\end{theorem}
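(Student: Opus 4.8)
The plan is to reduce both assertions to determining the structure of the generalised Fitting subgroup $F^*(G_{x,y})=F(G_{x,y})E(G_{x,y})$ of the edge stabiliser $H:=G_{x,y}$, and then to read conditions~$(i)$ and $(ii)$ as two dichotomies controlling, respectively, the layer $E(H)$ and the odd part of $F(H)$. Throughout I would use the elementary facts about the stabiliser series: $G_x^{[1]}$ fixes $y\in\Delta(x)$ and so $G_x^{[1]}\le H$, both $G_x^{[1]}$ and $G_y^{[1]}$ are normal in $H$, $G_{x,y}^{[1]}=G_x^{[1]}\cap G_y^{[1]}$ is normal in $H$, and (since $G_{x,y}^{[1]}\trianglelefteq H$) $F^*(G_{x,y}^{[1]})\le F^*(H)$. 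The two ways in which $F^*(H)$ can fail to be a $p$-group are the presence of a second prime in $F(H)$ and a nontrivial component, and I would dispose of these in turn. I stress in advance that the sharper first assertion does \emph{not} follow from the second by pure group theory (a group $L$ with $F^*(L)$ a $p$-group need not be a $p$-group), so connectedness of $\Delta$ must enter the final step.

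For the Fitting subgroup I would fix a prime $q$ and examine the normal $q$-subgroup $O_q(H)$ through its image on $\Delta(x)$. As $O_q(H)\le H$ fixes $y$, its image lies in the stabiliser of the point $y$ inside $\langle O_q(G_{x,z})\mid z\in\Delta(x)\rangle^{\Delta(x)}$, which by condition~$(ii)$ is transitive or semiregular. In the semiregular branch this point stabiliser is trivial, whence $O_q(H)\le G_x^{[1]}$; running the symmetric argument at $y$ gives $O_q(H)\le G_{x,y}^{[1]}$. In the transitive branch the generated group acts transitively on $\Delta(x)$, and this rigidity is what, after interchanging $x$ and $y$ if necessary, will produce the collapse $G_{x,y}^{[1]}=G_x^{[2]}$. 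The crux here is to show that the transitive branch can occur for at most one prime, and that this prime is the promised $p$; for every other prime $q$ the semiregular confinement, fed into the standard Thompson--Wielandt propagation along the connected edge-transitive graph, yields $O_q(H)=1$.

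For the layer I would take a component $K$ of $H$ and apply the dichotomy of a component against a normal subgroup to $K$ and $G_x^{[1]}\trianglelefteq H$: either $K\le G_x^{[1]}$ or $K$ centralises $G_x^{[1]}$, i.e. $K\le C_{G_x}(G_x^{[1]})$. In the latter case condition~$(i)$ governs $K^{\Delta(x)}$, which fixes $y$; its semiregular branch forces $K^{\Delta(x)}=1$ and hence $K\le Z(G_x^{[1]})$, impossible for the nonabelian $K$, while its transitive branch is again absorbed into the exceptional configuration. Iterating the alternative at $x$ and at $y$ confines any surviving component to $G_{x,y}^{[1]}$, and the same propagation argument forces $K=1$, so that $E(H)=1$. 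Combining this with the Fitting analysis gives that $F^*(G_{x,y})$ is a $p$-group, which (outside the exceptional configuration) is the second assertion, while within that configuration one extracts $G_{x,y}^{[1]}=G_x^{[2]}$ and the sharper vanishing $G_y^{[2]}=1$.

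Finally, for the first assertion, from $F^*(H)$ being a $p$-group I obtain that $F^*(G_{x,y}^{[1]})\le F^*(H)$ is a $p$-group, so $O_q(G_{x,y}^{[1]})=1$ and $E(G_{x,y}^{[1]})=1$ for every prime $q\ne p$; upgrading this to ``$G_{x,y}^{[1]}$ is a $p$-group'' is where connectedness is indispensable, via the coprime analysis of the action of $G_{x,y}^{[1]}$ on the sections $G_x^{[i]}/G_x^{[i+1]}$ along $\Delta$, and it is here that the exceptional configuration degenerates to $G_{x,y}^{[1]}=G_x^{[2]}$ with $G_y^{[2]}=G_y^{[3]}$ a $p$-group. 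I expect the main obstacle to be the layer analysis together with the bookkeeping of the transitive branches: a component is only \emph{subnormal} in $G_x$, not normal, so its image on $\Delta(x)$ is not a normal subgroup of $G_x^{\Delta(x)}$ and cannot be controlled directly, and the two transitive branches must be shown to force exactly one configuration, with the interchange of $x$ and $y$ interacting correctly with the asymmetry ``transitive on $\Delta(x)$ but semiregular on $\Delta(y)$''.
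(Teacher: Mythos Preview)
Your high-level architecture matches the paper's: kill the layer $E(G_{x,y})$ using condition~$(i)$ and the component dichotomy, and control $F(G_{x,y})$ prime by prime using condition~$(ii)$. The layer step is essentially right (though you should note that ``every component of $G_{x,y}$ lies in $G_x^{[1]}$'' upgrades to $E(G_{x,y})=E(G_x^{[1]})$, which is what makes the resulting group normal in $G_x$ and hence killable by connectedness).

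There is, however, a genuine gap in how you handle the first assertion. You plan to prove the $F^*$ statement first and then deduce that $G_{x,y}^{[1]}$ is a $p$-group by ``coprime analysis of the action of $G_{x,y}^{[1]}$ on the sections $G_x^{[i]}/G_x^{[i+1]}$''. This does not work: from $F^*(G_{x,y})$ being a $p$-group you only get that $G_{x,y}^{[1]}$ is $p$-constrained (its $O_p$ is self-centralising), and no section argument promotes this to ``$G_{x,y}^{[1]}$ is a $p$-group''. The paper actually proceeds in the opposite order, proving the first assertion directly and then doing \emph{extra} two-prime case analysis for the $F^*$ statement.

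The missing technical idea is a commutator estimate plus a Frattini argument. Writing $E_x=\langle O_p(G_{x,z})\mid z\in\Delta(x)\rangle$, one has $[G_x^{[1]},O_p(G_{x,z})]\le G_x^{[1]}\cap O_p(G_{x,z})\le O_p(G_x^{[1]})$ for each $z$, hence $[G_x^{[1]},E_x]\le O_p(G_x^{[1]})$. When $E_x$ is transitive on $\Delta(x)$ (your ``transitive branch''), this means that for \emph{any} $r$-subgroup $R\le G_x^{[1]}$ with $r\neq p$, the group $E_x$ normalises $R\,O_p(G_x^{[1]})$; since $O_p(G_x^{[1]})\le E_x$, the Frattini argument gives $E_x=N_{E_x}(R)\,O_p(G_x^{[1]})$, so $N_{G_x}(R)$ is transitive on $\Delta(x)$. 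Applying this at both ends to a Sylow $r$-subgroup $R$ of $G_{x,y}^{[1]}$ and invoking the core-free lemma yields $R=1$; this is how one gets ``$G_{x,y}^{[1]}$ is a $p$-group'' directly, without any detour through $F^*$. The same normaliser-transitivity is what proves $G_y^{[2]}$ is a $p$-group in the mixed case (take $R$ a Sylow $r$-subgroup of $G_y^{[2]}\le G_x^{[1]}$ and use Frattini at $y$), and the commutator estimate is also what gives the collapse $G_{x,y}^{[1]}=G_x^{[2]}$: it shows $E_x$ normalises $G_{x,y}^{[1]}$, and transitivity of $E_x$ on $\Delta(x)$ then forces $G_{x,y}^{[1]}=G_{x,z}^{[1]}$ for all $z\in\Delta(x)$. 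None of these three steps is visible in your plan, and they do not follow from the vaguer ``rigidity'' and ``propagation'' you invoke.
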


The following is an immediate corollary of Theorem~\ref{thm:main}.
\begin{corollary}\label{cor:1}Let $\Delta$ be a connected $G$-vertex-transitive, $G$-locally semiprimitive graph and $(x,y)$ an arc of $\Delta$. Then $G_{x,y}^{[1]}$ is a $p$-group for some prime $p$. Furthermore either $F^*(G_{x,y})$ is a $p$-group or $G_{x,y}^{[1]}=1$. 
\end{corollary}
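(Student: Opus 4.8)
The plan is to derive the corollary directly from Theorem~\ref{thm:main} by verifying that, in the locally semiprimitive case, the exceptional alternative of the theorem collapses. First I would observe that since $\Delta$ is $G$-locally semiprimitive, the group $G_x^{\Delta(x)}$ is semiprimitive for each vertex $x$; as noted in the paragraph following Hypothesis~\ref{hyp1}, the normality of $C_{G_x}(G_x^{[1]})$ and of $\langle O_p(G_{x,y})\mid y\in\Delta(x)\rangle$ in $G_x$ means that their images in $G_x^{\Delta(x)}$ are normal subgroups of a semiprimitive group, hence each is transitive or semiregular on $\Delta(x)$. Thus $\Delta$ and $G$ satisfy Hypothesis~\ref{hyp1}, and Theorem~\ref{thm:main} applies to the arc $(x,y)$. (Note that local semiprimitivity forces $G_x^{\Delta(x)}$ to be transitive, so $\Delta$ is in fact $G$-arc-transitive, and the two vertices $x$ and $y$ play symmetric roles.)

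The first conclusion of Theorem~\ref{thm:main} already gives two possibilities: either $G_{x,y}^{[1]}$ is a $p$-group outright, or (after possibly swapping $x$ and $y$) we have $G_{x,y}^{[1]}=G_x^{[2]}$ while $G_y^{[2]}=G_y^{[3]}$ is a $p$-group. I would argue that in the second case $G_{x,y}^{[1]}$ is again a $p$-group: by $G$-arc-transitivity the subgroups $G_x^{[2]}$ and $G_y^{[2]}$ are conjugate in $G$, so $G_{x,y}^{[1]}=G_x^{[2]}\cong G_y^{[2]}$, and since $G_y^{[2]}=G_y^{[3]}$ is a $p$-group its conjugate $G_x^{[2]}$ is a $p$-group as well. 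This establishes that $G_{x,y}^{[1]}$ is a $p$-group in all cases, which is the first assertion of the corollary.

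For the second assertion I would use the second conclusion of Theorem~\ref{thm:main}: either $F^*(G_{x,y})$ is a $p$-group, or (again possibly interchanging $x$ and $y$) we have $G_{x,y}^{[1]}=G_x^{[2]}$ together with $G_y^{[2]}=1$. In the latter case the arc-transitivity once more gives $G_{x,y}^{[1]}=G_x^{[2]}\cong G_y^{[2]}=1$, so $G_{x,y}^{[1]}=1$, which is exactly the stated alternative. Hence either $F^*(G_{x,y})$ is a $p$-group or $G_{x,y}^{[1]}=1$, completing the proof.

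The step I expect to carry the real content is the reduction of the asymmetric alternatives of Theorem~\ref{thm:main} to symmetric statements; everything hinges on exploiting $G$-arc-transitivity to transport the vanishing or $p$-group property of $G_y^{[2]}$ across the arc to $G_x^{[2]}=G_{x,y}^{[1]}$. The only point requiring care is confirming that arc-transitivity genuinely makes $G_x^{[2]}$ and $G_y^{[2]}$ conjugate (rather than merely isomorphic as abstract groups), so that the $p$-group or triviality property is preserved; this follows because an element of $G$ mapping the arc $(y,x)$ to $(x,y)$ conjugates the pointwise stabiliser structure at $y$ to that at $x$.
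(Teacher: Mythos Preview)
Your proposal is correct and follows essentially the same route as the paper: verify Hypothesis~\ref{hyp1}, observe that $G$-vertex-transitivity together with local transitivity gives $G$-arc-transitivity, and then use the resulting conjugacy of $G_x^{[2]}$ and $G_y^{[2]}$ to collapse the asymmetric alternatives of Theorem~\ref{thm:main}. The paper's own proof is much terser---it simply records that Hypothesis~\ref{hyp1} holds, that arc-transitivity makes $G_x^{[2]}$ and $G_y^{[2]}$ conjugate, and that the conclusions then follow from Theorem~\ref{thm:main}---but the underlying argument is identical to yours.
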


Theorem~\ref{thm:main} and Corollary~\ref{cor:1} are already known for $G$-vertex-transitive, $G$-locally primitive (respectively quasiprimitive) graphs, see for example~\cite[Theorem]{Thompson},~\cite[Theorem~$1.1$]{VanBon} and~\cite[Theorem~$1$]{W}. Therefore Corollary~\ref{cor:1} can be viewed as a natural generalisation of well-established theorems on locally primitive and quasiprimitive graphs to locally semiprimitive graphs. 

Furthermore, in the light of Corollary~\ref{cor:1} the Praeger and the Weiss conjecture and Conjecture~A ask whether the size of the $p$-group $G_{x,y}^{[1]}$ is bounded above by some function of the valency of $\Delta$.   

In general, for a connected $G$-arc-transitive graph and for an arc $(x,y)$, the group $G_{x,y}^{[1]}$ is not necessarily a  $p$-group. In this paper, we provide new remarkable examples of $G$-arc-transitive graphs $\Delta$ where $G_{x,y}^{[1]}$ has arbitrarily many and arbitrarily large composition factors  and furthermore with $G$ being an almost simple group. Indeed, we prove the following theorem.

\begin{theorem}\label{thm:2}Let $rs$  be a composite integer (where $r,s>1$), $R$ and $S$ transitive permutation groups of degree $r$ and  $s$ (respectively), and $m\geq 3$ an odd integer. There exists a connected $G$-arc-transitive graph $\Delta$ of valency $rs$ with $G$ almost simple such that, for an arc $(x,y)$ of $\Delta$, $G_x^{\Delta(x)}$ is permutation isomorphic to $R\wr S$ (in its imprimitive action of degree $rs$) and $G_{x,y}^{[1]}\cong R^{s(m-2)+1}$. 
\end{theorem}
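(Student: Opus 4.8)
The plan is to construct the graph $\Delta$ as a coset graph, so that the desired local structure is built in by design. I would start by choosing the almost simple group $G$ together with a point stabiliser $G_x$ whose action on $\Delta(x)$ realises the wreath product $R \wr S$ in its imprimitive action of degree $rs$. The natural candidate for $G$ is an almost simple group with socle an alternating group $\Alt(n)$ (or a classical group) of large degree, where the relevant subgroup structure — in particular large $p$-subgroups and iterated stabilisers — is flexible enough to accommodate prescribed local actions. The odd integer $m \ge 3$ should govern the ``depth'' of the graph, i.e.\ the length of the chain $G_{x,y}^{[1]} \supseteq G_x^{[2]} \supseteq \cdots$, and the exponent $s(m-2)+1$ strongly suggests that $G_{x,y}^{[1]}$ is assembled from $m-2$ ``layers'' each contributing a copy of $R^s$, plus one extra copy of $R$.

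First I would make the local action explicit: arrange $\Delta(x)$ to be a set of $rs$ points partitioned into $s$ blocks of size $r$, with $G_x^{\Delta(x)} \cong R \wr S$ permuting the blocks via $S$ and acting within each block via $R$. The key is then to identify, inside $G_x$, the kernel $G_x^{[1]}$ of the action on $\Delta(x)$ and to control how $G_x^{[1]}$ and $G_y^{[1]}$ intersect and interact along the arc $(x,y)$. Concretely, I would build $G$ as a group acting on a suitable combinatorial object (for instance a set with a product-like or imprimitive structure indexed by the parameters $r,s,m$), choose adjacent cosets $x = G_x$ and $y = G_x g$ for an involution $g$ normalising the relevant structure, and then compute $G_{x,y}^{[1]} = G_x^{[1]} \cap G_y^{[1]}$ directly from the combinatorics. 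The target isomorphism $G_{x,y}^{[1]} \cong R^{s(m-2)+1}$ should emerge because the pointwise stabiliser of the two neighbourhoods fixes all but a ``boundary'' portion of the underlying object, and the free parameters living on the $m-2$ intermediate layers each carry an independent copy of $R^s$.

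The main verification steps are then: (1) connectivity of $\Delta$, which amounts to checking that $G = \langle G_x, g \rangle$; (2) arc-transitivity, which follows automatically from the coset graph construction once $g$ normalises $G_{x,y}$ and $g^2 \in G_{x,y}$ with $g$ swapping $x$ and $y$; (3) the identification $G_x^{\Delta(x)} \cong R \wr S$; and (4) the precise computation of $G_{x,y}^{[1]}$. Since $G$ is required to be almost simple, I would also have to confirm that the ambient group I construct is genuinely almost simple and not merely close to one — this is where a careful choice of socle and of the extending automorphisms matters.

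I expect the main obstacle to be step (4) together with the almost-simplicity constraint: engineering a \emph{single} almost simple group whose subgroup lattice simultaneously produces the prescribed imprimitive local action $R \wr S$ \emph{and} a pointwise-neighbourhood stabiliser of the exact shape $R^{s(m-2)+1}$ is delicate, because almost simple groups do not have the abundant normal structure that makes such iterated stabilisers easy to prescribe (contrast with affine or product-type groups). The parameter $m$ being odd is almost certainly forced by a symmetry or parity condition in the construction — likely ensuring that the involution $g$ swapping $x$ and $y$ exists and acts compatibly on the layered structure — and I would need to pin down exactly where oddness is used. Once the group and the two adjacent cosets are correctly set up, the computation of $G_{x,y}^{[1]}$ should reduce to a transparent (if lengthy) bookkeeping over the $m-2$ layers, so the difficulty is concentrated in the construction rather than in the final count.
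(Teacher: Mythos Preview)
Your outline is on the right track---coset graph, socle an alternating group, an involution $a$ swapping adjacent vertices, and a layered structure indexed by $m$---and these are exactly the ingredients the paper uses. But the proposal has a genuine gap at the point you yourself flag as the main obstacle: you frame the problem as \emph{starting} from an almost simple $G$ and then locating inside it a subgroup $H$ with the required local action and iterated stabilisers. That direction is hopeless, because almost simple groups have no convenient normal structure to pin down $G_x^{[1]}$ and $G_{x,y}^{[1]}$ in advance. The paper inverts the perspective: it first writes down $H$ and the involution $a$ as completely explicit permutations of a set $\Omega$ of size $mrs+r-1$, partitioned into $m$ blocks $X_0,\ldots,X_{m-1}$ of size $rs$ (each further split into $s$ blocks $Y_{i,j}$ of size $r$) and one block $X_m$ of size $r-1$. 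The group $H\cong (R^{sm}\rtimes S)\times R_{r-1}$ acts in the obvious way; $a$ is an involution that pairs up the layers $X_j$ in a pattern that only closes up when $m$ is odd (this is precisely where oddness enters). With $H$ and $a$ explicit, the computations of $H\cap H^a$, the core $L=G_x^{[1]}$, and $L\cap L^a=G_{x,y}^{[1]}\cong R^{s(m-2)+1}$ become direct bookkeeping, much as you anticipate.

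The missing technical idea is how one then proves that $G=\langle H,a\rangle$ is almost simple. The paper does this via the classical theorem of Marggraff on Jordan sets: one shows $\langle H,a\rangle$ is primitive on $\Omega$ (a combinatorial argument with the block structure), observes that the subgroup $R(Y_{0,0})\leq H$ fixes $\Omega\setminus Y_{0,0}$ pointwise and is transitive on $Y_{0,0}$, so $Y_{0,0}$ is a Jordan set of size $r<|\Omega|/2$, and concludes $\Alt(\Omega)\leq G\leq\Sym(\Omega)$. Without this device your plan has no mechanism for verifying almost simplicity, and that step is not a formality---it is the whole point of the construction. Once you reverse the order (build $H$ and $a$ first, then deduce $\Alt(\Omega)\leq G$), the rest of your outline goes through essentially as written.
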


Theorem~\ref{thm:2} shows that, in the very restricted class of graphs admitting an \emph{arc-transitive almost simple} group of automorphisms $G$, some assumptions on the local action of $G$ are needed for guaranteeing that  $G_{x,y}^{[1]}$ is a $p$-group . A similar comment applies to the Praeger and the Weiss Conjecture and to Conjecture~A. Namely, since $m$ in Theorem~\ref{thm:2} does not depend on the valency $rs$, the size of the group $G_{x,y}^{[1]}$ is not bounded by a function of $rs$. 

\subsection{Structure of the paper}
Section~\ref{sec:2} consists of seven technical lemmas whose proof is
obtained by  adapting, to the more general context of Hypothesis~\ref{hyp1},  Lemmas~$2.1$--$2.7$ in~\cite{VanBon}. Since the paper is short and
elementary, we give a full argument although the proofs of our lemmas are essentially as in~\cite{VanBon}. 
The proofs of Theorem~\ref{thm:main} and
Corollary~\ref{cor:1} are in 
Section~\ref{sec:3}. Finally, Theorem~\ref{thm:2} follows at once from Theorem~\ref{thm:3} in Section~\ref{sec:4}.

\subsection{Notation}\label{sec:not}
In this subsection we fix some notation that we use in the rest of the
paper. Let $\Delta$ be a finite connected graph and 
$G$ a group of automorphisms of $\Delta$. Given a vertex $z$
of $\Delta$, we denote by $\Delta(z)$ the set of vertices adjacent to
$z$, by $G_z$ the stabiliser in $G$ of $z$ and (for $i\geq 0$) by
$G_z^{[i]}$ the group   
$$\{g\in G_z\mid y^g=y\textrm{ for all }y\in \Delta \textrm{
with }y \textrm{ at distance at most }i \textrm{ from }z\}.$$
Given an arc $(x,y)$ of $\Delta$ we define
$G_{x,y}^{[1]}=G_x^{[1]}\cap G_y^{[1]}$, that is, the subgroup of $G$ fixing pointwise $\Delta(x)$ and $\Delta(y)$.

We denote
by $F^*(G)$ the generalised Fitting subgroup of $G$, by $F(G)$ the
Fitting subgroup of $G$ and by $E(G)$ the subgroup  generated by
the components of $G$ (that is, the subnormal quasisimple subgroups of
$G$). We refer 
to~\cite[Chapter~$6$ \S 6]{Suzuki}  
for definitions and for basic results on $F^*(G),F(G)$ and $E(G)$.  

Finally, a permutation group $L$ is semiregular if the identity is the only element of $L$ fixing some point.

\section{Lemmata}\label{sec:2}
Recall that Notation~\ref{sec:not} is assumed throughout.
\begin{lemma}\label{lemma:0}
Let $(x,y)$ be an arc of $\Delta$. Suppose that $N$ is a subgroup
of $G_x$ acting transitively on $\Delta(x)$ and 
 $M$ is a subgroup of $G_y$ acting transitively on
$\Delta(y)$. If $H\leq G_{x,y}$ and $H\vartriangleleft\langle
N,M\rangle$, then $H=1$. 
\end{lemma}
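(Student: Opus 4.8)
The plan is to show that $H$ fixes every vertex of $\Delta$ and then to invoke the faithfulness of the action of $G\le\Aut(\Delta)$ on $V\Delta$ to conclude $H=1$. Write $K=\langle N,M\rangle$, so that by hypothesis $H\trianglelefteq K$, and let $\mathrm{Fix}(H)$ denote the set of vertices fixed by every element of $H$. Since $H\le G_{x,y}$, both $x$ and $y$ lie in $\mathrm{Fix}(H)$.

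First I would observe that normality forces $\mathrm{Fix}(H)$ to be $K$-invariant: for $g\in K$, $v\in\mathrm{Fix}(H)$ and $h\in H$ we have $(v^g)^h=v^{(ghg^{-1})g}=(v^{ghg^{-1}})^g=v^g$, because $ghg^{-1}\in H$ fixes $v$. In particular the two $K$-orbits $x^K$ and $y^K$ are contained in $\mathrm{Fix}(H)$, and it therefore suffices to prove $x^K\cup y^K=V\Delta$.

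The core combinatorial step is to show that $W:=x^K\cup y^K$ is closed under adjacency and hence, by connectivity, equals $V\Delta$. Since $y\in\Delta(x)$ and $N\le K$ is transitive on $\Delta(x)$, we get $\Delta(x)=y^N\subseteq y^K$; symmetrically $\Delta(y)=x^M\subseteq x^K$. Now for any $v\in x^K$, say $v=x^g$ with $g\in K$, the neighbourhood satisfies $\Delta(v)=\Delta(x)^g\subseteq (y^K)^g=y^K\subseteq W$, and likewise $\Delta(v)\subseteq x^K\subseteq W$ for every $v\in y^K$. Thus every neighbour of a vertex of $W$ again lies in $W$; as $x\in W$ and $\Delta$ is connected, $W=V\Delta$. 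Combining this with the previous paragraph yields $\mathrm{Fix}(H)=V\Delta$, so $H$ acts trivially on $\Delta$ and hence $H=1$.

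The only delicate point is that $K$ need not be vertex-transitive, so one cannot simply work with a single orbit; the argument instead exploits that $x$ and $y$ are adjacent and that $N$ and $M$ are transitive on the respective neighbourhoods, so that passing to neighbours interchanges the two orbits $x^K$ and $y^K$ and they jointly exhaust $V\Delta$. I expect this adjacency-closure bookkeeping to be the main thing to get right; everything else is immediate from the normality of $H$ in $K$ and the faithfulness of $G$ on $V\Delta$.
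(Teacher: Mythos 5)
Your proof is correct and takes essentially the same route as the paper: both arguments show that $\langle N,M\rangle$ has at most the two orbits $x^K$ and $y^K$ covering $V\Delta$ (via connectivity and the transitivity of $N$ on $\Delta(x)$ and of $M$ on $\Delta(y)$), and then conclude $H=1$ from the faithfulness of $G\le\Aut(\Delta)$ on the vertices --- which the paper phrases as $L_x$, respectively $L_{x,y}$, being core-free. You simply avoid the paper's two-case split and spell out the adjacency-closure and normality steps that the paper leaves implicit.
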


\begin{proof}
Set $L=\langle N,M\rangle$. If $x$ and $y$ are in the same $L$-orbit,
then $L$ acts transitively on the vertices of
$\Delta$. In particular, $L_{x}$ is a core-free subgroup of $L$ and
$H=1$. If $x$ and $y$ are in distinct $L$-orbits, 
then $L$ has exactly two orbits $x^L$ and $y^L$ on the vertices of
$\Delta$. In particular, $L_{x,y}$ is a core-free subgroup of $L$ and  $H=1$. 
\end{proof}

\begin{lemma}\label{lemma:2.2}
Let $(x,y)$ be an arc of $\Delta$. If $E(G_x^{[1]})\neq E(G_{x,y})$, then
$G_{x,y}^{[1]}=G_x^{[2]}$ and $G_y^{[2]}=1$. Moreover, if
$E(G_x^{[1]})=E(G_{x,y})=E(G_y^{[1]})$, then
$E(G_{x,y})=1$. 
\end{lemma}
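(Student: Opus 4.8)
The plan is to treat the two assertions separately, using Lemma~\ref{lemma:0} as the engine in both. Throughout I use that $G_x^{[1]}$ fixes $y\in\Delta(x)$, so $G_x^{[1]}\leq G_{x,y}$ and $G_x^{[1]}\trianglelefteq G_{x,y}$ (being normal in $G_x$); likewise $G_y^{[1]}\trianglelefteq G_{x,y}$, hence $G_{x,y}^{[1]}=G_x^{[1]}\cap G_y^{[1]}\trianglelefteq G_{x,y}$. I also use the standard fact that, for $N\trianglelefteq H$, the components of $N$ are exactly the components of $H$ lying in $N$; applied to $G_x^{[1]}\trianglelefteq G_{x,y}$ this gives $E(G_x^{[1]})\leq E(G_{x,y})$, with equality precisely when every component of $G_{x,y}$ lies in $G_x^{[1]}$.

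The second assertion is the quicker one. Assuming $E(G_x^{[1]})=E(G_{x,y})=E(G_y^{[1]})$, the subgroup $E(G_{x,y})$ is characteristic in $G_x^{[1]}$ and in $G_y^{[1]}$, hence (as these are normal in $G_x$, resp.\ $G_y$) normal in $G_x$ and in $G_y$, so in $\langle G_x,G_y\rangle$. Moreover $E(G_{x,y})=E(G_x^{[1]})=E(G_y^{[1]})\leq G_x^{[1]}\cap G_y^{[1]}=G_{x,y}^{[1]}\leq G_{x,y}$. Taking $N=G_x$, $M=G_y$, $H=E(G_{x,y})$ in Lemma~\ref{lemma:0} forces $E(G_{x,y})=1$.

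For the first assertion, suppose $E(G_x^{[1]})\neq E(G_{x,y})$, so $G_{x,y}$ has a component $Q$ with $Q\not\leq G_x^{[1]}$. Since $G_x^{[1]}\trianglelefteq G_{x,y}$ and a component either lies in or centralises a given normal subgroup, $Q$ centralises $G_x^{[1]}$; thus $Q\leq C:=C_{G_x}(G_x^{[1]})$, and $C\trianglelefteq G_x$. The crucial step is to activate Hypothesis~\ref{hyp1}$(i)$: because $Q\leq G_{x,y}$ fixes $y$ while $Q\not\leq G_x^{[1]}$ acts nontrivially on $\Delta(x)$, the group $C^{\Delta(x)}$ contains a nonidentity element fixing the point $y$, so it is not semiregular; by $(i)$ it must therefore be transitive on $\Delta(x)$. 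I expect this to be the main obstacle, since everything hinges on manufacturing, from a component lying outside $G_x^{[1]}$, a point-fixing nonidentity element of $C^{\Delta(x)}$ that excludes the semiregular alternative in $(i)$.

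With $C$ transitive on $\Delta(x)$ and centralising $G_x^{[1]}$, both conclusions follow via the factorisation $G_x=CG_{x,y}$ (orbit--stabiliser, as $C$ is transitive on $\Delta(x)$). First, $C$ centralises $G_{x,y}^{[1]}\leq G_x^{[1]}$ while $G_{x,y}$ normalises $G_{x,y}^{[1]}$, so $G_x=CG_{x,y}$ normalises $G_{x,y}^{[1]}$; hence $G_{x,y}^{[1]}\trianglelefteq G_x$, and conjugating by elements of $G_x$ carrying $y$ to each $y'\in\Delta(x)$ yields $G_{x,y}^{[1]}=G_{x,y'}^{[1]}$ for all such $y'$, whence $G_{x,y}^{[1]}=\bigcap_{y'\in\Delta(x)}G_{x,y'}^{[1]}=G_x^{[2]}$. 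Second, $G_y^{[2]}\leq G_y^{[1]}\cap G_x^{[1]}\leq G_{x,y}$ is centralised by $C$ and normalised by $G_y$, so taking $N=C$, $M=G_y$, $H=G_y^{[2]}$ in Lemma~\ref{lemma:0} gives $G_y^{[2]}=1$.
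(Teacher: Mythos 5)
Your proof is correct and follows essentially the same route as the paper: a component of $G_{x,y}$ outside $G_x^{[1]}$ forces $C_{G_x}(G_x^{[1]})$ to act non-semiregularly, hence transitively, on $\Delta(x)$ via Hypothesis~\ref{hyp1}$(i)$, after which Lemma~\ref{lemma:0} and the conjugation argument give both conclusions, and the second assertion is the same normality-plus-Lemma~\ref{lemma:0} argument. The only cosmetic difference is your explicit factorisation $G_x=CG_{x,y}$, which the paper phrases directly as conjugating $y$ to $y'$ by elements of the transitive group $C$.
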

\begin{proof}
Suppose that $E(G_{x,y})\neq E(G_{x}^{[1]})$. Since $G_x^{[1]}$ is a
normal subgroup of $G_{x,y}$, we have $E(G_x^{[1]})\leq
E(G_{x,y})$. As $E(G_{x,y})\neq E(G_{x}^{[1]})$, there exists  a
component $K$ of $G_{x,y}$ with $K\not\leq
G_x^{[1]}$. Then, by~\cite[Lemma~$6.9$~$(iv)$]{Suzuki}, we have 
$[K,G_x^{[1]}]=1$. Since $K$ is not contained in $G_x^{[1]}$, the
group $K$ acts non-trivially
on $\Delta(x)$. As $C_{G_x}(G_x^{[1]})$ contains $K$, the group
$C_{G_x}(G_x^{[1]})$ acts non-trivially on $\Delta(x)$. 

Set $N=C_{G_x}(G_x^{[1]})$. By Hypothesis~\ref{hyp1}~$(i)$, it follows that $N^{\Delta(x)}$ is either
transitive or semiregular on $\Delta(x)$. Since $K\leq N$, $K\leq
G_{x,y}$ and $K\not\leq G_x^{[1]}$, the group
$N$ is not semiregular on $\Delta(x)$. Thence $N$ acts
transitively on $\Delta(x)$. Since $y$ is adjacent to $x$, the group
$G_y^{[2]}$ is  a subgroups of $G_x^{[1]}$ and so $N$ centralises
$G_y^{[2]}$. In particular, $G_y^{[2]}\leq G_{x,y}$ and
$G_y^{[2]}\vartriangleleft\langle N,G_y\rangle$. By 
Lemma~\ref{lemma:0}, we get 
$G_y^{[2]}=1$. Since $N$ is transitive on $\Delta(x)$ and centralises
$G_{x,y}^{[1]}$, we have 
$G_{x,y}^{[1]}=G_{x,y'}^{[1]}$  for every $y,y'$ in $\Delta(x)$. This
yields  $G_{x,y}^{[1]}=G_x^{[2]}$ and the first part of the lemma
is proved. 

If $E(G_x^{[1]})=E(G_{x,y})=E(G_y^{[1]})$, then $E(G_{x,y}^{[1]})\leq G_{x,y}$ and
$E(G_{x,y}^{[1]})\vartriangleleft\langle G_x,G_y\rangle$,
and hence $E(G_{x,y}^{[1]})=1$ by Lemma~\ref{lemma:0}. 
\end{proof}

Let $(x,y)$ be an arc of $\Delta$, and let $p$ be a prime. Define
\begin{eqnarray*}
S_{x,y}&=&O_p(G_{x,y}),\\
E_x&=&\langle S_{x,y}^g\mid g \in G_x \rangle=\langle S_{x,z}\mid z\in
\Delta(x)\rangle,\\ 
E_y&=&\langle
S_{x,y}^g\mid g \in G_y\rangle=\langle S_{z,y}\mid z\in
\Delta(y)\rangle.
\end{eqnarray*} 

\begin{lemma}\label{lemma:2.3}Let $p$ be a prime. If $x$ is a vertex
  of $\Delta$, then $[G_x^{[1]},E_x]\leq O_p(G_x^{[1]})$.
\end{lemma}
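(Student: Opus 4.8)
The plan is to reduce the assertion about the generated group $E_x$ to the corresponding assertion for each generator $S_{x,z}=O_p(G_{x,z})$, $z\in\Delta(x)$, and then to exploit the fact that $G_x^{[1]}$ and $S_{x,z}$ are both normal in $G_{x,z}$.

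First I would record the ambient normality that everything will be collected into. Since $G_x^{[1]}$ is the kernel of the action of $G_x$ on $\Delta(x)$, it is normal in $G_x$, and hence $O_p(G_x^{[1]})$, being characteristic in $G_x^{[1]}$, is normal in $G_x$. As each $S_{x,z}\le G_{x,z}\le G_x$ we have $E_x\le G_x$, so $\langle G_x^{[1]},E_x\rangle\le G_x$ and $O_p(G_x^{[1]})$ is normal in $\langle G_x^{[1]},E_x\rangle$.

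Next I would fix a neighbour $z\in\Delta(x)$ and prove the local estimate $[G_x^{[1]},S_{x,z}]\le O_p(G_x^{[1]})$. The subgroup $G_x^{[1]}$ is contained in $G_{x,z}$ and is normal in $G_x\supseteq G_{x,z}$, hence normal in $G_{x,z}$; likewise $S_{x,z}=O_p(G_{x,z})$ is normal in $G_{x,z}$. Therefore their commutator lies in their intersection, $[G_x^{[1]},S_{x,z}]\le G_x^{[1]}\cap S_{x,z}$. This intersection is a $p$-group, since it sits inside $S_{x,z}$, and it is normal in $G_{x,z}$ as an intersection of two normal subgroups; in particular it is normalised by $G_x^{[1]}$ and contained in $G_x^{[1]}$, so it is a normal $p$-subgroup of $G_x^{[1]}$ and hence lies in $O_p(G_x^{[1]})$. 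This yields $[G_x^{[1]},S_{x,z}]\le O_p(G_x^{[1]})$ for every $z\in\Delta(x)$.

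Finally I would pass from the generators to all of $E_x$. Setting $N:=O_p(G_x^{[1]})$ and working in $\bar G:=\langle G_x^{[1]},E_x\rangle/N$, the previous step shows that $\overline{G_x^{[1]}}$ centralises each $\overline{S_{x,z}}$. Since $C_{\bar G}(\overline{G_x^{[1]}})$ is a subgroup containing every $\overline{S_{x,z}}$, it contains the subgroup they generate, which is the image of $E_x$; thus $\overline{G_x^{[1]}}$ centralises $\overline{E_x}$, that is, $[G_x^{[1]},E_x]\le N=O_p(G_x^{[1]})$. There is no genuine obstacle in this argument; the only point requiring care is this last passage from generators to $E_x$, which uses precisely that $N$ is normal in $\langle G_x^{[1]},E_x\rangle$ so that the centraliser above is a bona fide subgroup.
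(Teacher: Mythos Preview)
Your proof is correct and follows essentially the same approach as the paper: both establish the local estimate $[G_x^{[1]},S_{x,z}]\le G_x^{[1]}\cap S_{x,z}\le O_p(G_x^{[1]})$ via normality of $G_x^{[1]}$ and $S_{x,z}$ in $G_{x,z}$, and then use the normality of $O_p(G_x^{[1]})$ in $G_x$ to pass from the generators $S_{x,z}$ to $E_x$. The only cosmetic difference is that the paper phrases the final step as $[G_x^{[1]},E_x]\le\langle[G_x^{[1]},S_{x,y}]^g\mid g\in G_x\rangle\le O_p(G_x^{[1]})$, whereas you pass to the quotient by $O_p(G_x^{[1]})$ and argue via centralisers; these are equivalent formulations of the same idea.
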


\begin{proof}Let $z$ be in $\Delta(x)$. The groups $G_x^{[1]}$ and
  $S_{x,z}$ are  normal subgroups of $G_{x,z}$. Hence $[G_x^{[1]},S_{x,z}]\leq
  G_x^{[1]}\cap S_{x,z}$. Since $G_x^{[1]}\cap S_{x,z}$ is a normal
  $p$-subgroup of $G_x^{[1]}$, we get $G_x^{[1]}\cap S_{x,z}\leq
  O_p(G_x^{[1]})$. Finally, as $G_x$ normalises $G_x^{[1]}$, we have
\begin{eqnarray*}
[G_x^{[1]},E_x]&=&[G_{x}^{[1]},\langle S_{x,y}^g\mid g \in
  G_x\rangle]\leq \langle [G_x^{[1]},S_{x,y}]^g\mid g \in
G_x\rangle\leq O_p(G_x^{[1]}).
\end{eqnarray*}
\end{proof}

\begin{lemma}\label{lemma:mio}
If $E_x\not\leq G_x^{[1]}$, then $E_x$ is transitive on $\Delta(x)$
and, for every $r$-subgroup $R$ of $G_x^{[1]}$
for a prime $r\neq p$,
the group $N_{G_x}(R)$ is transitive on $\Delta(x)$.
\end{lemma}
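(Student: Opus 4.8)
The plan is to treat the two assertions separately: the transitivity of $E_x$ follows directly from Hypothesis~\ref{hyp1}$(ii)$, while the statement about $N_{G_x}(R)$ is obtained by a Frattini argument resting on Lemma~\ref{lemma:2.3}.

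For the transitivity of $E_x$, I would first observe that $E_x^{\Delta(x)}=\langle O_p(G_{x,z})\mid z\in\Delta(x)\rangle^{\Delta(x)}$ is exactly the group appearing in Hypothesis~\ref{hyp1}$(ii)$, so it is either transitive or semiregular on $\Delta(x)$, and that the assumption $E_x\not\leq G_x^{[1]}$ says precisely that $E_x^{\Delta(x)}\neq 1$. The key observation is that each generator $S_{x,z}=O_p(G_{x,z})$ lies in $G_z$ and so fixes the point $z\in\Delta(x)$. Hence, were $E_x^{\Delta(x)}$ semiregular, every $S_{x,z}^{\Delta(x)}$ would fix $z$ and would therefore be trivial, giving $S_{x,z}\leq G_x^{[1]}$ for all $z$ and so $E_x\leq G_x^{[1]}$, a contradiction. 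Thus $E_x^{\Delta(x)}$ must be transitive, which proves the first claim.

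For the second claim, fix an $r$-subgroup $R\leq G_x^{[1]}$ with $r\neq p$ and set $Q=O_p(G_x^{[1]})$. Since $Q$ is characteristic in $G_x^{[1]}\vartriangleleft G_x$, it is normal in $G_x$, and in particular $E_x$ normalises $Q$. By Lemma~\ref{lemma:2.3} we have $[R,E_x]\leq[G_x^{[1]},E_x]\leq Q$, so $R^g\leq RQ$ for every $g\in E_x$; together with $E_x$ normalising $Q$ this shows that $E_x$ normalises $RQ$. Consequently $J:=E_x\cdot RQ$ is a subgroup of $G_x$ in which $RQ$ is normal. As $R$ is an $r$-group and $Q$ a $p$-group with $r\neq p$, we have $R\cap Q=1$, whence $R$ is a Sylow $r$-subgroup of $RQ$.

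Now the Frattini argument applied to $RQ\vartriangleleft J$ gives $J=RQ\cdot N_J(R)$. Since $RQ\leq G_x^{[1]}$ acts trivially on $\Delta(x)$, restricting the action to $\Delta(x)$ yields $N_J(R)^{\Delta(x)}=J^{\Delta(x)}=E_x^{\Delta(x)}$, which is transitive by the first part of the lemma. Because $N_J(R)\leq N_{G_x}(R)$, the group $N_{G_x}(R)$ is transitive on $\Delta(x)$, as required. I expect the main obstacle to be this middle step: one must use the commutator bound of Lemma~\ref{lemma:2.3} to absorb $R$ together with the $p$-group $Q$ into a single $E_x$-invariant subgroup on which the Frattini argument can be run, and then exploit the fact that this subgroup lies in the kernel $G_x^{[1]}$ of the action on $\Delta(x)$ in order to transfer transitivity from $E_x$ to $N_{G_x}(R)$.
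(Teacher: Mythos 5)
Your proposal is correct and follows essentially the same route as the paper: the first claim is ruled out of the semiregular case by noting each generator $S_{x,z}$ fixes $z$, and the second claim is the same Frattini argument on $RO_p(G_x^{[1]})$ via the commutator bound of Lemma~\ref{lemma:2.3}. The only cosmetic difference is that you run Frattini inside $J=E_x\cdot RO_p(G_x^{[1]})$ rather than using the paper's observation that $O_p(G_x^{[1]})\leq S_{x,y}\leq E_x$ to write $E_x=N_{E_x}(R)O_p(G_x^{[1]})$ directly; both yield the transitivity of $N_{G_x}(R)$ on $\Delta(x)$.
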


\begin{proof}
Let $y$ be in $\Delta(x)$.  By
Hypothesis~\ref{hyp1}~$(ii)$, 
the group $E_x$ is either transitive or 
semiregular on $\Delta(x)$. Assume $E_x$ semiregular on
$\Delta(x)$. Since $S_{x,y}$ is
contained in $E_x$ and 
fixes the neighbour $y$ of $x$, we get that $S_{x,y}$ fixes pointwise
$\Delta(x)$ and $S_{x,y}\leq G_x^{[1]}$. Since $G_x^{[1]}$
is normal in $G_x$ and $E_x=\langle S_{x,y}^g\mid g\in G_x\rangle$,
we obtain $E_x\leq G_x^{[1]}$, a
contradiction. This yields that $E_x$ is transitive on
$\Delta(x)$.

Since $G_x^{[1]}$ is a normal subgroup of $G_{x,y}$, we get that
$O_p(G_x^{[1]})$ is contained in $O_p(G_{x,y})=S_{x,y}\leq
E_x$.  Let $R$ be an
$r$-subgroup of $G_x^{[1]}$ with $r\neq p$. From
Lemma~\ref{lemma:2.3}, we get $[R,E_x]\leq [G_x^{[1]},E_x]\leq
O_p(G_x^{[1]})$. So, $E_x$ normalises
$RO_p(G_x^{[1]})$. Since $R$ is a Sylow $r$-subgroup of
$RO_p(G_x^{[1]})$ and $O_p(G_x^{[1]})\leq E_x$, by the Frattini
argument, we obtain $E_x=N_{E_x}(R)O_p(G_x^{[1]})$ and $N_{E_x}(R)$
acts transitively on 
$\Delta(x)$.  
\end{proof}

\begin{lemma}\label{lemma:2.5}
Let $p$ be a prime and $(x,y)$ an arc of $\Delta$.
\begin{description}
\item[$(i)$]If $E_x\leq G_x^{[1]}$ and $E_y\leq G_y^{[1]}$, then  $O_p(G_{x,y})=1$.
\item[$(ii)$]If $E_x\not\leq G_x^{[1]}$ and $E_y\not\leq G_y^{[1]}$, then
  $G_{x,y}^{[1]}$ is a $p$-group.
\end{description}
\end{lemma}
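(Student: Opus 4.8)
My plan is to treat the two parts separately, using the transitivity/semiregularity dichotomy from Hypothesis~\ref{hyp1}~$(ii)$ together with Lemma~\ref{lemma:0}. For part $(i)$, I would argue that $S_{x,y}=O_p(G_{x,y})$ is normalised by enough of $\langle G_x, G_y\rangle$ to force it to be trivial. The hypotheses $E_x\leq G_x^{[1]}$ and $E_y\leq G_y^{[1]}$ tell me that $S_{x,y}$, being contained in both $E_x$ and $E_y$, lies in $G_x^{[1]}\cap G_y^{[1]}=G_{x,y}^{[1]}$. I would then like to show that $S_{x,y}$ (or a suitable characteristic subgroup that I can prove is normal in $\langle G_x, G_y\rangle$) is normal in the group generated by two vertex stabilisers acting transitively on their respective neighbourhoods, and invoke Lemma~\ref{lemma:0} to conclude triviality. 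The delicate point is producing a subgroup $H\leq G_{x,y}$ with $H\vartriangleleft\langle N,M\rangle$ for transitive $N\leq G_x$, $M\leq G_y$; I expect one takes $H=E_x=E_y$ and checks that, since $E_x$ is generated by $G_x$-conjugates of $S_{x,y}$ and lies in $G_x^{[1]}$, it is normalised by $G_x$, and symmetrically by $G_y$, so that $E_x\vartriangleleft\langle G_x,G_y\rangle$ forces $E_x=1$ and hence $S_{x,y}=1$, i.e. $O_p(G_{x,y})=1$.

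For part $(ii)$, the assumptions $E_x\not\leq G_x^{[1]}$ and $E_y\not\leq G_y^{[1]}$ put me squarely in the transitive case of Lemma~\ref{lemma:mio} at both $x$ and $y$: both $E_x$ and $E_y$ are transitive on their respective neighbourhoods. The goal is to show $G_{x,y}^{[1]}$ is a $p$-group, which amounts to showing that for every prime $r\neq p$, any $r$-subgroup of $G_{x,y}^{[1]}$ is trivial. So I fix a prime $r\neq p$ and let $R$ be a Sylow $r$-subgroup of $G_{x,y}^{[1]}$; since $G_{x,y}^{[1]}\leq G_x^{[1]}$, this $R$ is an $r$-subgroup of $G_x^{[1]}$, and Lemma~\ref{lemma:mio} applied at $x$ gives that $N_{G_x}(R)$ is transitive on $\Delta(x)$. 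By the symmetric hypothesis at $y$, after conjugating $R$ appropriately (or choosing $R$ inside a Sylow subgroup that is normalised suitably) I similarly want $N_{G_y}(R)$ transitive on $\Delta(y)$.

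The main obstacle, and the step that needs care, is aligning the two Sylow/normaliser statements so that a single subgroup $R$ is simultaneously seen from the $x$-side and the $y$-side. The natural route is a Frattini-type argument: choose $R$ to be an $r$-subgroup of $G_{x,y}^{[1]}$, set $N=N_{G_x}(R)$ and $M=N_{G_y}(R)$, and use Lemma~\ref{lemma:mio} to deduce that $N$ is transitive on $\Delta(x)$ and $M$ is transitive on $\Delta(y)$. Then $R$ is normalised by both $N$ and $M$, so the normal closure of $R$ in $\langle N,M\rangle$—which still lies in $G_{x,y}$ because $R$ does and $R$ is normal in $\langle N, M\rangle$—can be fed into Lemma~\ref{lemma:0} with $H=R$, yielding $R=1$. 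Since this holds for every prime $r\neq p$, the group $G_{x,y}^{[1]}$ has trivial Sylow $r$-subgroup for all $r\neq p$ and is therefore a $p$-group. The one technical subtlety I anticipate is justifying that I may take the \emph{same} $R$ to be normalised transitively from both sides; I expect this is handled by passing to a Sylow $r$-subgroup of $G_{x,y}^{[1]}$ and using that Lemma~\ref{lemma:mio} applies to \emph{any} $r$-subgroup of $G_x^{[1]}$, so no compatibility adjustment between the two sides is actually required.
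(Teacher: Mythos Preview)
Your argument for part~$(ii)$ is correct and is exactly the paper's proof: pick a Sylow $r$-subgroup $R$ of $G_{x,y}^{[1]}$ for $r\neq p$, apply Lemma~\ref{lemma:mio} at both $x$ and $y$ (no compatibility adjustment is needed, since $R\leq G_{x,y}^{[1]}\leq G_x^{[1]}\cap G_y^{[1]}$), and feed $R$ into Lemma~\ref{lemma:0}.

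Your argument for part~$(i)$, however, has a genuine gap. You write ``one takes $H=E_x=E_y$'', but you give no reason why $E_x$ and $E_y$ should coincide, and in general they need not: $E_x$ is the normal closure of $S_{x,y}$ in $G_x$, while $E_y$ is its normal closure in $G_y$. It is certainly true that $E_x\vartriangleleft G_x$ and $E_y\vartriangleleft G_y$, but the ``symmetrically by $G_y$'' step is asserting that $G_y$ normalises $E_x$, which does not follow from anything you have established. Without a single subgroup normal in both $G_x$ and $G_y$, Lemma~\ref{lemma:0} cannot be invoked.

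The paper repairs this by working not with $E_x$ but with $O_p(G_x^{[1]})$. From $S_{x,y}\leq E_x\leq G_x^{[1]}$ one sees that $O_p(G_{x,y})$ is a normal $p$-subgroup of $G_x^{[1]}$ (it is normal in the overgroup $G_{x,y}\geq G_x^{[1]}$), hence $O_p(G_{x,y})\leq O_p(G_x^{[1]})$; and conversely $G_x^{[1]}\vartriangleleft G_{x,y}$ forces $O_p(G_x^{[1]})\leq O_p(G_{x,y})$. Thus $O_p(G_{x,y})=O_p(G_x^{[1]})$, and by the symmetric argument $O_p(G_{x,y})=O_p(G_y^{[1]})$. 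Now $O_p(G_x^{[1]})$ is characteristic in $G_x^{[1]}\vartriangleleft G_x$, so $O_p(G_{x,y})\vartriangleleft G_x$, and likewise $O_p(G_{x,y})\vartriangleleft G_y$; Lemma~\ref{lemma:0} then gives $O_p(G_{x,y})=1$. The key idea you are missing is this identification $O_p(G_{x,y})=O_p(G_x^{[1]})=O_p(G_y^{[1]})$, which produces a single subgroup visibly normal on both sides.
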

\begin{proof}
We first prove $(i)$. Assume $E_x\leq G_x^{[1]}$ and $E_y\leq
G_y^{[1]}$. We have
$O_p(G_{x,y})=S_{x,y}\leq E_x\leq 
G_{x}^{[1]}$. Thence $O_p(G_{x,y})$ is a normal $p$-subgroup of
$G_x^{[1]}$ and so $O_p(G_{x,y})\leq O_p(G_x^{[1]})$. Also, since
$G_x^{[1]}$ is normal in $G_{x,y}$, we obtain 
$O_p(G_x^{[1]})\leq O_p(G_{x,y})$. Thence 
$O_p(G_{x,y})=O_p(G_x^{[1]})$. By symmetry between $x$ and $y$,
$O_p(G_{x,y})=O_p(G_y^{[1]})$. In particular, $O_p(G_{x,y})$ is a
normal subgroup of $G_x$ and $G_y$. Therefore $O_p(G_{x,y})\leq
G_{x,y}$ and
$O_p(G_{x,y})\vartriangleleft\langle G_x,G_y\rangle$. Hence
Lemma~\ref{lemma:0} yields $O_p(G_{x,y})=1$.

Now we prove~$(ii)$. Let
$R$ be a Sylow $r$-subgroup of $G_{x,y}^{[1]}$ with $r\neq
p$. Lemma~\ref{lemma:mio} implies that $N_{G_x}(R)$ acts transitively
on $\Delta(x)$ and $N_{G_y}(R)$ acts transitively on
$\Delta(y)$. Since $R\leq G_{x,y}$ and 
$R\vartriangleleft\langle N_{G_x}(R),N_{G_y}(R)\rangle$,
Lemma~\ref{lemma:0} yields $R=1$.  This proves that
either $G_{x,y}^{[1]}=1$ or  $p$ is the only prime dividing the
order of $G_{x,y}^{[1]}$. 
\end{proof}

\begin{lemma}\label{lemma:2.6}
Let $p$ be a prime and $(x,y)$ an arc of $\Delta$. If $E_x\not\leq
G_x^{[1]}$, then $G_y^{[2]}$ is a $p$-group.
\end{lemma}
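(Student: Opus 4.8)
The plan is to prove Lemma~\ref{lemma:2.6}: assuming $E_x\not\leq G_x^{[1]}$, show that $G_y^{[2]}$ is a $p$-group. The key structural fact available is Lemma~\ref{lemma:mio}, which tells us that under the hypothesis $E_x\not\leq G_x^{[1]}$, the group $E_x$ acts transitively on $\Delta(x)$, and moreover $N_{G_x}(R)$ is transitive on $\Delta(x)$ for every $r$-subgroup $R$ of $G_x^{[1]}$ with $r\neq p$. Since we want to show $G_y^{[2]}$ has no prime divisor other than $p$, the natural strategy mirrors the argument for Lemma~\ref{lemma:2.5}~$(ii)$: pick a Sylow $r$-subgroup $R$ of $G_y^{[2]}$ for a prime $r\neq p$ and aim to force $R=1$ via the normality trick of Lemma~\ref{lemma:0}.

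First I would record that $G_y^{[2]}\leq G_x^{[1]}$, since any automorphism fixing everything at distance $\leq 2$ from $y$ in particular fixes $x$ (a neighbour of $y$) and all vertices at distance $\leq 1$ from $x$; indeed $x\in\Delta(y)$ and $\Delta(x)\subseteq \{y\}\cup\{$vertices at distance $\leq 2$ from $y\}$. So $G_y^{[2]}$ is a subgroup of $G_x^{[1]}$, and any $r$-subgroup $R$ of $G_y^{[2]}$ is an $r$-subgroup of $G_x^{[1]}$. Applying Lemma~\ref{lemma:mio} with this $R$ (for $r\neq p$), I obtain that $N_{G_x}(R)$ acts transitively on $\Delta(x)$. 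I then want a matching transitive subgroup of $G_y$ so that the two together form a group $L=\langle N_{G_x}(R),?\rangle$ in which $R$ is normal. The delicate point is symmetry: the hypothesis $E_x\not\leq G_x^{[1]}$ is only assumed at $x$, not at $y$, so I cannot directly invoke Lemma~\ref{lemma:mio} at the vertex $y$.

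To get transitivity on $\Delta(y)$, I would instead exploit that $G_y^{[2]}$ is normalised within $G_y$ in a strong way: $G_y^{[2]}$ is normal in $G_y$ (it is defined by a distance condition invariant under $G_y$), hence a Sylow $r$-subgroup $R$ of $G_y^{[2]}$ is such that $N_{G_y}(R)$ covers $G_y/G_y^{[2]}$ by the Frattini argument, $G_y=N_{G_y}(R)\,G_y^{[2]}$. Since $G_y^{[2]}\leq G_x^{[1]}$ fixes $\Delta(y)$ pointwise (as $\Delta(y)$ lies at distance $\leq 1$ from $y$, so $G_y^{[1]}\geq G_y^{[2]}$ acts trivially there), the factor $G_y^{[2]}$ contributes nothing to the action on $\Delta(y)$, and therefore $N_{G_y}(R)$ is transitive on $\Delta(y)$ because $G_y$ is. Now $R\leq G_{x,y}$ and $R$ is normalised by both $N_{G_x}(R)$ and $N_{G_y}(R)$, so $R\vartriangleleft \langle N_{G_x}(R),N_{G_y}(R)\rangle$, and Lemma~\ref{lemma:0}, with $N=N_{G_x}(R)$ transitive on $\Delta(x)$ and $M=N_{G_y}(R)$ transitive on $\Delta(y)$, forces $R=1$.

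The main obstacle I anticipate is precisely the asymmetry just noted: securing transitivity on $\Delta(y)$ without an analogue of Lemma~\ref{lemma:mio} at $y$. The resolution is the Frattini-argument observation that $G_y^{[2]}$ acts trivially on $\Delta(y)$, so normalising a Sylow subgroup of it costs nothing in the neighbourhood action; I should verify carefully that $R\leq G_{x,y}$ (clear, since $R\leq G_y^{[2]}\leq G_x^{[1]}\leq G_{x,y}$ and $R$ fixes $y$) and that $R$ is genuinely a Sylow $r$-subgroup of $G_y^{[2]}$ so the Frattini argument applies. Once $R=1$ for every prime $r\neq p$, I conclude that $G_y^{[2]}$ is a $p$-group, completing the proof.
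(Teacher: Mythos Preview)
Your proposal is correct and matches the paper's proof essentially step for step: take a Sylow $r$-subgroup $R$ of $G_y^{[2]}$ for $r\neq p$, use $G_y^{[2]}\leq G_x^{[1]}$ and Lemma~\ref{lemma:mio} to get $N_{G_x}(R)$ transitive on $\Delta(x)$, use the Frattini argument on $G_y^{[2]}\trianglelefteq G_y$ to get $N_{G_y}(R)$ transitive on $\Delta(y)$, and then apply Lemma~\ref{lemma:0} to force $R=1$. Your identification of the asymmetry issue and its resolution via Frattini is exactly the point of the paper's argument.
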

\begin{proof}
Let $R$ be a Sylow $r$-subgroup of $G_y^{[2]}$ with $r\neq
p$. Since $R$ is an $r$-subgroup of $G_x^{[1]}$, 
Lemma~\ref{lemma:mio} yields that $N_{G_x}(R)$ acts transitively on
$\Delta(x)$. The
Frattini argument yields 
$G_y=G_y^{[2]}N_{G_y}(R)$. In particular, $N_{G_y}(R)$ acts
transitively on $\Delta(y)$. Since $R\leq G_{x,y}$ and
$R\vartriangleleft\langle N_{G_x}(R),N_{G_y}(R)\rangle$,
Lemma~\ref{lemma:0} yields  $R=1$. This shows that $G_y^{[2]}$ is a $p$-group.
\end{proof}

\begin{lemma}\label{lemma:2.7}
Let $p$ be a prime and $(x,y)$ an arc of $\Delta$. If $E_x\not\leq
G_x^{[1]}$ and $E_y\leq G_y^{[1]}$, then 
$G_{x,y}^{[1]}=G_x^{[2]}$ and $G_y^{[2]}=G_y^{[3]}$.
\end{lemma}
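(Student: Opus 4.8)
The plan is to establish the two equalities separately, the first carrying the real content. Write $W=G_{x,y}^{[1]}=G_x^{[1]}\cap G_y^{[1]}$. The inclusion $G_x^{[2]}\leq W$ is automatic: each neighbour $z$ of $x$ and each vertex of $\Delta(z)$ lies within distance $2$ of $x$, so $G_x^{[2]}$ fixes $\Delta(x)$ and every such $\Delta(z)$ pointwise, giving $G_x^{[2]}\leq G_x^{[1]}\cap G_y^{[1]}=W$. It thus remains to prove $W\leq G_x^{[2]}$, and I would do this by exhibiting a subgroup of $G_x$ that is transitive on $\Delta(x)$ and normalises $W$.

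The first task is to locate $O_p(G_x^{[1]})$ inside $W$, and this is precisely where the hypothesis $E_y\leq G_y^{[1]}$ enters. Since $S_{x,y}=O_p(G_{x,y})\leq E_y\leq G_y^{[1]}$ and $O_p(G_{x,y})$ is normal in $G_{x,y}$, it is a normal $p$-subgroup of $G_y^{[1]}$, so $O_p(G_{x,y})\leq O_p(G_y^{[1]})$; conversely, as $x\in\Delta(y)$ forces $G_y^{[1]}\leq G_{x,y}$ and hence $G_y^{[1]}$ normal in $G_{x,y}$, the reverse inclusion holds and $O_p(G_{x,y})=O_p(G_y^{[1]})$. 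Since $G_x^{[1]}$ is also normal in $G_{x,y}$ we get $O_p(G_x^{[1]})\leq O_p(G_{x,y})=O_p(G_y^{[1]})\leq G_y^{[1]}$, and together with $O_p(G_x^{[1]})\leq G_x^{[1]}$ this yields the key containment $O_p(G_x^{[1]})\leq W$.

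The main step, and the point I expect to be the crux, is that $E_x$ normalises $W$. By Lemma~\ref{lemma:2.3}, $[W,E_x]\leq[G_x^{[1]},E_x]\leq O_p(G_x^{[1]})\leq W$, so $W^g\leq W$ for every $g\in E_x$ and hence $E_x\leq N_G(W)$. Because $E_x\not\leq G_x^{[1]}$, Lemma~\ref{lemma:mio} gives that $E_x$ is transitive on $\Delta(x)$. Now for $g\in E_x$ we have $W=W^g\leq(G_y^{[1]})^g=G_{y^g}^{[1]}$, and as $y^g$ ranges over all of $\Delta(x)$ this shows $W\leq\bigcap_{z\in\Delta(x)}G_z^{[1]}$. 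Thus $W$ fixes $x$, fixes $\Delta(x)$ pointwise, and fixes each $\Delta(z)$ pointwise for $z\in\Delta(x)$, that is, $W$ fixes every vertex within distance $2$ of $x$; hence $W\leq G_x^{[2]}$ and $G_{x,y}^{[1]}=G_x^{[2]}$.

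Finally I would deduce $G_y^{[2]}=G_y^{[3]}$ from this. Since $x\in\Delta(y)$, each $z\in\Delta(x)$ satisfies $d(y,z)\leq 2$, so $G_y^{[2]}$ fixes $\Delta(x)$ pointwise and $G_y^{[2]}\leq G_x^{[1]}\cap G_y^{[1]}=W=G_x^{[2]}$. The group $G_y$ is transitive on $\Delta(y)$ by Hypothesis~\ref{hyp1} and $G_y^{[2]}$ is normal in $G_y$, so for any $w\in\Delta(y)$, choosing $h\in G_y$ with $x^h=w$ gives $G_y^{[2]}=(G_y^{[2]})^h\leq(G_x^{[2]})^h=G_w^{[2]}$. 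Hence every $g\in G_y^{[2]}$ fixes all vertices within distance $2$ of $y$ and, lying in each $G_w^{[2]}$, all vertices within distance $2$ of each $w\in\Delta(y)$; as these together exhaust the vertices within distance $3$ of $y$, we obtain $g\in G_y^{[3]}$. Therefore $G_y^{[2]}\leq G_y^{[3]}$, and since the reverse inclusion is trivial, $G_y^{[2]}=G_y^{[3]}$.
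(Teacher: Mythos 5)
Your proof is correct and follows essentially the same route as the paper: you locate $O_p(G_x^{[1]})$ inside $G_{x,y}^{[1]}$ via $E_y\leq G_y^{[1]}$, use Lemma~\ref{lemma:2.3} to show $E_x$ normalises $G_{x,y}^{[1]}$, invoke Lemma~\ref{lemma:mio} for transitivity of $E_x$ on $\Delta(x)$ to conclude $G_{x,y}^{[1]}=G_x^{[2]}$, and then conjugate by $G_y$ to get $G_y^{[2]}=G_y^{[3]}$. The only (harmless) divergences are your extra observation that $O_p(G_{x,y})=O_p(G_y^{[1]})$ and your slightly more explicit final distance argument.
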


\begin{proof}
Since $E_y\leq G_y^{[1]}$, we have $O_p(G_{x,y})=S_{x,y}\leq E_y\leq
G_y^{[1]}$. Also, as $G_x^{[1]}$ is normal in $G_{x,y}$, we get
$O_p(G_x^{[1]})\leq O_p(G_{x,y})$.  In 
particular, we obtain
$O_p(G_x^{[1]})\leq O_p(G_{x,y})\leq G_y^{[1]}$. Since 
 $O_p(G_x^{[1]})$ is normal in $G_x$, $G_x$ is transitive on $\Delta(x)$
and $O_p(G_x^{[1]})\leq G_y^{[1]}\cap G_x^{[1]}=G_{x,y}^{[1]}$, we get
$O_p(G_x^{[1]})\leq G_x^{[2]}$. From Lemma~\ref{lemma:2.3}, we obtain
$$[G_{x,y}^{[1]},E_x]\leq [G_x^{[1]},E_x]\leq 
O_p(G_x^{[1]})\leq G_x^{[2]}\leq G_{x,y}^{[1]}.$$ Hence $E_x$
normalises $G_{x,y}^{[1]}$. As $E_x\not\leq G_x^{[1]}$, by
Lemma~\ref{lemma:mio}, $E_x$ 
acts transitively on $\Delta(x)$. Thence 
$G_{x,z}^{[1]}=G_{x,y}^{[1]}$ for all $z\in \Delta(x)$. This implies
that $G_{x,y}^{[1]}=G_x^{[2]}$. Finally, let $(y,a,b)$ be a path of
length $2$ in $\Delta$. Since $(a,y)=(x,y)^g$ for some $g\in G$, we 
have $G_y^{[2]}\leq G_{a,y}^{[1]}=G_a^{[2]}\leq G_b^{[1]}$. From this it follows
that $G_y^{[2]}=G_y^{[3]}$.
\end{proof}

\section{Proofs Theorem~\ref{thm:main} and Corollary~\ref{cor:1}}\label{sec:3}

\noindent\emph{Proof of Theorem~\ref{thm:main}. }
If $G_{x,y}=1$, then there is result is clear. So, from now on
we may assume that $G_{x,y}\neq 1$. If $E(G_x^{[1]})\neq E(G_{x,y})$, then
by Lemma~\ref{lemma:2.2}  we get $G_{x,y}^{[1]}=G_x^{[2]}$ and
$G_y^{[2]}=1$. Similarly, if $E(G_y^{[1]})\neq E(G_{x,y})$, then
$G_{x,y}^{[1]}=G_y^{[2]}$ and $G_x^{[2]}=1$. In particular, from now
on we may assume that $E(G_x^{[1]})=E(G_{x,y})=E(G_y^{[1]})$. From
Lemma~\ref{lemma:2.2}, we get
$E(G_{x,y})=1$. In particular, 
$F^*(G_{x,y})=E(G_{x,y})F(G_{x,y})=F(G_{x,y})$. Thus there
exists a prime $p$ 
with $O_p(G_{x,y})\neq 1$. Consider for this prime $p$ the groups
$E_x$ and $E_y$.  If $E_x\not\leq G_x^{[1]}$ and $E_y\not\leq G_x^{[1]}$, then
by Lemma~\ref{lemma:2.5}~$(ii)$, the group $G_{x,y}^{[1]}$ is a
$p$-group. As $O_p(G_{x,y})\neq 1$, by Lemma~\ref{lemma:2.5}~$(i)$,
interchanging $x$ 
and $y$ if necessary, we may assume that $E_x\not\leq G_x^{[1]}$ and $E_y\leq
G_y^{[1]}$. Lemma~\ref{lemma:2.6} shows that $G_y^{[2]}$ is a 
$p$-group and Lemma~\ref{lemma:2.7} gives $G_{x,y}^{[1]}=G_x^{[2]}$ and $G_y^{[2]}=G_y^{[3]}$. This completes the proof of the first
part of the theorem.

As $F^*(G_{x,y})=F(G_{x,y})$, it remains to study the group $F(G_{x,y})$. If $F(G_{x,y})$ is a $p$-group for some prime $p$, then there is nothing to prove. Therefore we may assume that there exist two distinct primes $p$ and $r$ 
with $O_p(G_{x,y}),O_r(G_{x,y})\neq 1$. Consider, for these primes $p$
and $r$, the groups
$E_{p,x},E_{p,y},E_{r,x}$ and $E_{r,y}$. As $O_p(G_{x,y})\neq 1$, by
Lemma~\ref{lemma:2.5}~$(i)$, interchanging $x$ 
and $y$ if necessary, we may assume that $E_{p,x}\not\leq
G_x^{[1]}$. Similarly, as $O_r(G_{x,y})\neq 1$, by 
Lemma~\ref{lemma:2.5}~$(i)$, either
$E_{r,x}\not\leq G_x^{[1]}$ or $E_{r,y}\not\leq G_y^{[1]}$. In the
rest of the proof we study separately these two cases.

\noindent\textsc{Case $E_{r,x}\not\leq G_x^{[1]}$. }If $E_{p,y}\leq
G_{y}^{[1]}$, then, from Lemmas~\ref{lemma:2.6} and~\ref{lemma:2.7}
(applied to the prime $p$) we get that $G_{x,y}^{[1]}=G_x^{[2]}$ and
$G_y^{[2]}=G_y^{[3]}$ is a $p$-group. As $E_{r,x}\not\leq G_x^{[1]}$,
by Lemma~\ref{lemma:2.6} (applied to 
the prime $r$) we obtain that $G_y^{[2]}$ is an $r$-group. Thus
$G_y^{[2]}=1$.  

If $E_{r,y}\leq G_y^{[1]}$, then from
Lemmas~\ref{lemma:2.6} and~\ref{lemma:2.7} 
(applied to the prime $r$) we get that $G_{x,y}^{[1]}=G_x^{[2]}$ and
$G_y^{[2]}=G_y^{[3]}$ is an $r$-group. As $E_{p,x}\not\leq
G_{x}^{[1]}$, by Lemma~\ref{lemma:2.6} (applied to
the prime $p$) we obtain that $G_y^{[2]}$ is a $p$-group. Thus
$G_y^{[2]}=1$.  

Finally, if $E_{p,y}\not\leq G_y^{[1]}$ and $E_{r,y}\not\leq G_y^{[1]}$, then
Lemma~\ref{lemma:2.5}~$(ii)$ (applied to the prime $p$ and $r$)
yields $G_{x,y}^{[1]}$ is a $p$-group and an $r$-group. So
$G_{x,y}^{[1]}=1$ and $G_x^{[2]}=G_y^{[2]}=1$.

\noindent\textsc{Case $E_{r,x}\leq G_{x}^{[1]}$ and $E_{r,y}\not\leq
  G_y^{[1]}$. } From
Lemmas~\ref{lemma:2.6} and~\ref{lemma:2.7} 
(applied to the prime $r$) we get that $G_{x,y}^{[1]}=G_y^{[2]}$ and
$G_x^{[2]}=G_x^{[3]}$ is an $r$-group. If $E_{p,y}\not\leq G_y^{[1]}$,
then by Lemma~\ref{lemma:2.5}~$(ii)$ (applied to the prime $p$) we get
$G_{x,y}^{[1]}$ is a $p$-group. Thence $G_{x}^{[2]}$ is a $p$-group
and so $G_x^{[2]}=1$. Finally, if $E_{p,y}\leq G_y^{[1]}$, then from
Lemmas~\ref{lemma:2.6} and~\ref{lemma:2.7} 
(applied to the prime $p$) we get that $G_{x,y}^{[1]}=G_x^{[2]}$ and
$G_y^{[2]}=G_y^{[3]}$ is a $p$-group. Thus $G_{x,y}^{[1]}$ is a
$p$-group and an $r$-group. So $G_{x,y}^{[1]}=1$ and
$G_x^{[2]}=G_y^{[2]}=1$.\qed\\ 

\noindent\emph{Proof of Corollary~\ref{cor:1}. }As $\Delta$ is $G$-locally semiprimitive, $\Delta$ and $G$ satisfy Hypothesis~\ref{hyp1}. Since $G$ acts
arc-transitively on $\Delta$, we have that $G_x^{[2]}$ is conjugate to
$G_y^{[2]}$. From Theorem~\ref{thm:main}, we have that $G_{x,y}^{[1]}$ 
is a $p$-group and that either
$F^*(G_{x,y})$ is a $p$-group or $G_{x,y}^{[1]}=G_x^{[2]}=1$.\qed\\

\section{The construction}\label{sec:4}
In this section we prove Theorem~\ref{thm:2} by exhibiting a very interesting family of arc-transitive graphs. Before embarking in this elaborate construction (which generalises the arc-transitive graphs given in~\cite[Section~$5$]{PSV2}) we recall the definition of \emph{coset graph} (see~\cite{Sabidussi}). Let $G$ be a group, $H$ a subgroup of $G$ and $a\in G$. The coset digraph $\Delta=\Cos(G,H,a)$ is the digraph with vertex set the right cosets of $H$ in $G$ and with arcs the ordered pairs $(Hx,Hy)$ such that $Hyx^{-1}H\subseteq HaH$ (where $HaH=\{hak\mid h,k\in H\}$). 
It is immediate to check that  $\Delta$ is undirected if and only if $a^{-1}\in HaH$, and $\Delta$ is connected if and only if $G=\langle H,a\rangle$.
Also the action of $G$ by right multiplication on $G/H$ induces an arc-transitive automorphism group of $\Delta$. Finally, $\Delta$ has valency $|H:(H\cap H)^a|$ and the action of the vertex-stabiliser $G_H=H$ on the neighbourhood $\Delta(H)$ of the vertex $H\in V\Delta$ is permutation isomorphic to the action of $H$ on the right cosets of $H\cap H^a$.

Now we are ready to introduce the graphs satisfying the statement of Theorem~\ref{thm:2}.  Let $R$ be a transitive permutation group of degree $r>1$ acting on the set $\{0,\ldots,r-1\}$, $S$ a transitive permutation group of degree $s>1$ acting on the set $\{0,\ldots,s-1\}$, $m\geq 3$ an odd integer and $\Omega=\{0,\ldots,mrs+r-2\}$. Let $\{X_0,\ldots,X_m\}$ be the partition of $\Omega$ defined by
\begin{eqnarray*}
X_j&=&\{jrs,1+jrs,\ldots, (rs-1)+jrs\}\qquad\textrm{for }0\leq j\leq m-1,\\
X_m&=&\{mrs,1+mrs,\ldots,(r-2)+mrs\}.
\end{eqnarray*}
In particular, for $j\in \{0,\ldots,m-1\}$, we have $|X_j|=rs$, and $|X_m|=r-1$. Now, for each $j\in \{0,\ldots,m-1\}$, let $\{Y_{0,j},\ldots,Y_{s-1,j}\}$ be the partition of  $X_j$ defined by
\begin{eqnarray*}
Y_{i,j}&=&\{ir+jrs,1+ir+jrs,\ldots, (r-1)+ir+jrs\}\qquad\textrm{for }0\leq i\leq s-1.
\end{eqnarray*}
In particular, for $i\in \{0,\ldots,s-1\}$ and $j\in \{0,\ldots,m-1\}$, we have $|Y_{i,j}|=r$. Also $\{Y_{i,j},X_m\mid 0\leq i\leq s-1,0\leq j\leq m-1\}$ is a partition of $\Omega$.

For each $i\in \{0,\ldots,s-1\},j\in \{0,\ldots,m-1\}$ and $\sigma\in R$, define the following permutation of $\Sym(\Omega)$
\[
x_{\sigma,i,j}:\left\{
\begin{array}{lllcl}
z+ir+jrs&\mapsto& z^{\sigma}+ir+jrs&&\textrm{for }0\leq z\leq r-1,\\
\omega&\mapsto&\omega&&\textrm{for }\omega\in \Omega\setminus Y_{i,j}.   
\end{array}
\right.
\]
The permutation $x_{\sigma,i,j}$ has support contained in $Y_{i,j}$ and its action on $Y_{i,j}$ is equivalent to the action of $\sigma$ on $\{0,\ldots,r-1\}$.
Write $$R(Y_{i,j})=\langle x_{\sigma,i,j}\mid \sigma\in R\rangle.$$
Clearly,  $R(Y_{i,j})$ fixes pointwise $\Omega\setminus Y_{i,j}$ and the action of $R(Y_{i,j})$ on $Y_{i,j}$ is permutation isomorphic to the action of $R$ on $\{0,\ldots,r-1\}$. 

Similarly, for each $\sigma\in R_{r-1}$ define the following permutation of $\Sym(\Omega)$
\[
x_{\sigma}:\left\{
\begin{array}{lllll}
\omega&\mapsto&\omega&&\textrm{for }\omega\in \Omega\setminus X_m,\\
z+mrs&\mapsto &z^{\sigma}+mrs&&\textrm{for }0\leq z\leq r-2.\\   
\end{array}
\right.
\]
Write $$R(X_m)=\langle x_{\sigma}\mid \sigma\in R_{r-1}\rangle.$$  Clearly,  $R(X_m)$ fixes pointwise $\Omega\setminus X_m$ and the action of $R(X_m)$ on $X_m$ is permutation isomorphic to the action of $R_{r-1}$ on $\{0,\ldots,r-2\}$.

Note that for each $i_1,i_2\in \{0,\ldots,s-1\}$ and $j_1,j_2\in \{0,\ldots,m-1\}$, the group $R(Y_{i_1,j_1})$ centralises $R(Y_{i_2,j_2})$ and $R(X_m)$. Set 

$$H'=\left(\prod_{j=0}^{m-1}\prod_{i=0}^{s-1}R(Y_{i,j})\right)\times R(X_m)\cong R^{sm}\times R_{r-1}.$$

For each $\tau\in S$ define the following permutation of $\Sym(\Omega)$
\[
y_{\tau}:\left\{
\begin{array}{lllcl}
z+ir+jrs&\mapsto& z+(i^\tau)r+jrs&&\textrm{for }0\leq z\leq r-1,0\leq i\leq s-1,\\
        &       &            &&0\leq j\leq m-1,\\  
z+mrs&\mapsto&z+mrs&&\textrm{for }0\leq z\leq r-2.   
\end{array}
\right.
\]
Write $$S(\Omega)=\langle x_\tau\mid \tau\in S\rangle.$$ Clearly,  $S(\Omega)$ fixes pointwise $\Omega\setminus X_m$ and, for each $j\in \{0,\ldots,m-1\}$, $S(\Omega)$ fixes setwise $X_j$. Furthermore, the action of $S(\Omega)$ on the partition $\{Y_{0,j},\ldots,Y_{s-1,j}\}$ of $X_j$ is permutation isomorphic to the action of $S$ on $\{0,\ldots,s-1\}$. This yields that $S(\Omega)$ normalises $H'$ and, for each $j\in \{0,\ldots,m-1\}$, the group $$\langle R(Y_{0,j}),\ldots,R(Y_{s-1,j}),S(\Omega)\rangle=(R(Y_{0,j})\times \cdots \times R(Y_{s-1,j}))\rtimes S(\Omega)$$ is isomorphic to $R\wr S$ and its  action on $X_j$ is equivalent to the natural imprimitive action of $R\wr S$ on $\{0,\ldots,rs-1\}$. Write

\begin{eqnarray*}
H&=&\langle H', S(\Omega)\rangle \cong (R^{sm}\rtimes S)\times R_{r-1} .
\end{eqnarray*}

We let $S_0(\Omega)$ denote the subgroup $\langle y_\tau\mid \tau\in S_0\rangle$ of $S(\Omega)$. Clearly, for each $j\in \{0,\ldots,m-1\}$, the group $S_0(\Omega)$ fixes pointwise $Y_{0,j}$ and $S_0(\Omega)\cong S_0$.

Define the following permutation of $\Sym(\Omega)$
\[
a:\left\{
\begin{array}{llll}
z&\mapsto &z+mrs&\textrm{for }0\leq z\leq r-2\\
r-1&\mapsto &r-1&\\
z+jrs&\mapsto&z+(j+1)rs&\textrm{for }0\leq z\leq r-1,\\
     &       &         &1\leq j\leq m-2, j \textrm{ odd}, \\
z+jrs&\mapsto&z+(j-1)rs&\textrm{for }0\leq z\leq r-1,\\
     &       &         &2\leq j\leq m-1, j \textrm{ even}, \\
z+ir+jrs&\mapsto&z+ir+(j+1)rs&\textrm{for }0\leq z\leq r-1,1\leq i\leq s-1,\\
     &       &         &0\leq j\leq m-3, j \textrm{ even}, \\
z+ir+jrs&\mapsto&z+ir+(j-1)rs&\textrm{for }0\leq z\leq r-1,1\leq i\leq s-1,\\
     &       &         &1\leq j\leq m-2, j \textrm{ odd}, \\
z+ir+(m-1)rs&\mapsto&z+ir+(m-1)rs&\textrm{for }0\leq z\leq r-1,1\leq i\leq s-1,\\
z+mrs&\mapsto&z&\textrm{for }0\leq z\leq r-2.
\end{array}
\right.
\]
Write $Y_{0,0}'=Y_{0,0}\setminus\{r-1\}$. Clearly $a$ is an involution of $\Sym(\Omega)$ fixing pointwise $X_{m-1}\setminus Y_{0,m-1}$ and $r-1$, and with
\[
\begin{array}{lllll}
Y_{0,0}'^a=X_m & \textrm{and}&X_m^a=Y_{0,0}',\\
Y_{0,j}^a=Y_{0,j+1}& \textrm{and}&Y_{0,j+1}^a=Y_{0,j}&\textrm{for }1\leq j\leq m-2, j \textrm{ odd},\\
Y_{i,j}^a=Y_{i,j-1}& \textrm{and}&Y_{i,j-1}^a=Y_{i,j}&\textrm{for }1\leq i\leq s-1,1\leq j\leq m-2, j \textrm{ odd}.\\
\end{array}
\]
In particular, $a$ normalises the subgroup $$K'=\left(R(Y_{0,0})_{r-1}\times R(Y_{1,0})\times \cdots \times R(Y_{s-1,0})\right)\times\left(
\prod_{j=1}^{m-1}\prod_{i=0}^{s-1}R(Y_{i,j})
\right) \times R(X_m)$$ of $H'$, centralises the subgroup $S_0(\Omega)$ of $S(\Omega)$, and hence normalises the subgroup $$K=\langle K',S_0(\Omega)\rangle=K'\rtimes S_{0}(\Omega)$$
of $H$.
\begin{lemma}\label{lemma:1}$|H:(H\cap H^a)|=rs$, the core of $H\cap H^a$ in $H$ is 
$$L=\left(\prod_{j=1}^{m-1}\prod_{i=0}^{s-1}R(Y_{i,j})\right)\times R(X_m)$$ and the action of $H/L$ on the right cosets of $(H\cap H^a)/L$ is equivalent to the natural imprimitive action of $R\wr S$ of degree $rs$.
\end{lemma}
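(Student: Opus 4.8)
The plan is to prove the three assertions simultaneously by first identifying $H\cap H^a$ with $K$ and then passing to the quotient $H/L$. Since $a$ normalises $K$, we have $K=K^a\le a^{-1}Ha=H^a$ and $K\le H$, so $K\le H\cap H^a$; thus it suffices to prove the reverse inclusion $H\cap H^a\le K$. The index $|H:K|$ can be read off from the orders: using $|R_{r-1}|=|R|/r$ and $|S_0(\Omega)|=|S|/s$ (by transitivity of $R$ and $S$) one finds $|H|=|R|^{sm+1}|S|/r$ and $|K|=|K'||S_0(\Omega)|=|R|^{sm+1}|S|/(r^2s)$, whence $|H:K|=rs$. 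Consequently, once $H\cap H^a=K$ is established, the valency $|H:(H\cap H^a)|$ equals $rs$, giving the first assertion.

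For the inclusion $H\cap H^a\le K$, write an arbitrary element of $H=(R^{sm}\rtimes S)\times R_{r-1}$ as $h=n\sigma c$, where $n=\prod_{i,j}n_{i,j}$ with $n_{i,j}\in R(Y_{i,j})$, $\sigma\in S(\Omega)$ and $c\in R(X_m)$. The crucial point is that $H$ preserves $X_m$ setwise, so if $h\in H\cap H^a$ then $h^a=a^{-1}ha\in H$ also preserves $X_m$; since $a$ is an involution with $X_m^a=Y_{0,0}'$, the identity $X_m^{h^a}=((X_m^a)^h)^a$ forces $(Y_{0,0}')^h=Y_{0,0}'$. Now the permutation of the blocks $Y_{i,j}$ induced by $h$ coincides with that induced by $\sigma$ (as $n$ fixes each $Y_{i,j}$ setwise and $c$ moves only points of $X_m$), and $\sigma$ sends $Y_{0,0}$ to $Y_{0^\tau,0}$; since $Y_{0,0}'$ is nonempty ($r>1$), preservation of $Y_{0,0}'$ forces $0^\tau=0$, i.e. $\sigma\in S_0(\Omega)$. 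Then $\sigma$ fixes $Y_{0,0}$ pointwise, so $h$ preserves $Y_{0,0}=Y_{0,0}'\cup\{r-1\}$ and fixes $r-1$; as $h$ acts on $Y_{0,0}$ through $n_{0,0}$ alone, this yields $n_{0,0}\in R(Y_{0,0})_{r-1}$. These are exactly the two constraints defining $K$ (the components $n_{i,j}$ with $(i,j)\neq(0,0)$ and $c$ being unconstrained), so $h\in K$. Proving $H\cap H^a\le K$ is the heart of the lemma, and the main obstacle is to extract these two constraints cleanly; the mechanism that does so is the swap $X_m\leftrightarrow Y_{0,0}'$ effected by $a$, combined with the fact that $H$ fixes $X_m$ setwise while treating $r-1$ as an ordinary point of $Y_{0,0}$.

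Finally I would pass to $\bar H=H/L$. As $S(\Omega)$ permutes the coordinates inside each block $X_j$ while fixing the index $j$, the subgroup $L$ is normal in $H$, and $L\le K=H\cap H^a$. Under $H\cong(R^{sm}\rtimes S)\times R_{r-1}$ the factor $L$ is precisely $R^{(m-1)s}\times R_{r-1}$ (the coordinates with $1\le j\le m-1$ together with $R(X_m)$), so $\bar H\cong R^s\rtimes S=R\wr S$, where $R^s$ corresponds to the $j=0$ coordinates and $S$ permutes them. Reducing the description of $K'$ modulo $L$ gives $K/L\cong(R_{r-1}\times R^{s-1})\rtimes S_0$, which is exactly the stabiliser in $R\wr S$ of the point $r-1$ of the first block in the natural imprimitive action of degree $rs$. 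This action is faithful, hence its point stabiliser is core-free, so $\mathrm{core}_H(H\cap H^a)=\mathrm{core}_H(K)=L$, which is the second assertion; and the action of $H/L$ on the right cosets of $(H\cap H^a)/L=K/L$ is then permutation isomorphic to the imprimitive action of $R\wr S$ of degree $rs$, which is the third.
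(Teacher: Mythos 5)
Your proof is correct, but the two key steps are carried out by a different mechanism than in the paper, so a comparison is worthwhile. For the inclusion $H\cap H^a\le K$ (which, as you say, is the heart of the matter), the paper identifies $K$ as the stabiliser in $H$ of the point $r-1$, computes both orbits $(r-1)^{H}=X_0$ and $(r-1)^{H^a}=X_m\cup\{r-1\}\cup\bigcup_{i=1}^{s-1}Y_{i,1}$ explicitly, and observes that they meet only in $\{r-1\}$; hence any $g\in H\cap H^a$ fixes $r-1$ and lies in $K$. You instead exploit the $H$-invariant set $X_m$: transporting it by $a$ gives that $H\cap H^a$ stabilises $Y_{0,0}'=X_m^a$ setwise, and unpacking the normal form $h=n\sigma c$ then yields exactly the two constraints $\sigma\in S_0(\Omega)$ and $n_{0,0}\in R(Y_{0,0})_{r-1}$ defining $K$. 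Your route avoids computing $(r-1)^{H^a}$ at the cost of a slightly more hands-on analysis of the structure of $H$; both are sound. For the core and the induced action, the paper again works concretely: since $H\cap H^a=H_{r-1}$, the coset action of $H$ is equivalent to the action of $H$ on $X_0$, whose kernel is visibly $L$ and whose image is the imprimitive $R\wr S$. You instead pass to the abstract quotient $H/L\cong R\wr S$ and recognise $K/L$ as the (core-free) point stabiliser in the natural imprimitive action; this requires checking that $L\trianglelefteq H$ and that the degree-$rs$ action of $R\wr S$ is faithful, both of which you address. The two derivations are equivalent, the paper's being marginally shorter because the identification with the action on $X_0$ delivers the core and the permutation isomorphism in one stroke.
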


\begin{proof}
The orbits of $r-1$ under $H$ and $H^a$ are
$$(r-1)^{H}=\{0,\ldots,rs-1\}=X_0$$
and
\begin{eqnarray*}
(r-1)^{H^a}&=&((r-1)^a)^{Ha}=(r-1)^{Ha}=X_0^a=\left(\bigcup_{i=0}^{s-1}Y_{i,0}\right)^a=\bigcup_{i=0}^{s-1}Y_{i,0}^a\\
&=&(Y_{0,0}'\cup\{r-1\})^a\cup\bigcup_{i=1}^{s-1} Y_{i,1}=X_m\cup \{r-1\}\cup \bigcup_{i=1}^{s-1}Y_{i,1}.
\end{eqnarray*}
Note that 
\begin{eqnarray*}
|H:K|&=&|S(\Omega):S_0(\Omega)||R(Y_{0,0}):R(Y_{0,0})_{r-1}|=|S:S_0||R:R_{r-1}|=rs,
\end{eqnarray*}
that $K$ fixes the point $r-1$ of $\Omega$ and that $(r-1)^{H}$ has size $rs$. This gives that $K$ is 
the stabiliser in $H$ of the point $r-1$ of $\Omega$. 

Let $g\in H\cap H^a$. We have $(r-1)^g\in (r-1)^H\cap (r-1)^{H^a}=\{r-1\}$ and hence $g$ fixes $r-1$. Therefore $H\cap H^a\subseteq K$. Since $a$ normalises $K$, we have $K\subseteq H\cap H^a$ and hence $H\cap H^a=K$ is the stabiliser in $H$ of the point $r-1$ of $\Omega$.

In particular, the action of $H$ on the right cosets of $H\cap H^a$ is permutation isomorphic to the action of $H$ on $(r-1)^H={X_0}$. Therefore the core of $H\cap H^a$ in $H$ is the pointwise stabiliser of $X_0$ in $H$, which is clearly $L$. Finally, as the action of $H$ on $X_0$ is permutation isomorphic to the imprimitive action of $R\wr S$ of degree $rs$, we have that the action of $H/L$ on the right cosets of $(H\cap H^a)/L$ is permutation isomorphic to the imprimitive action of $R\wr S$ of degree $rs$.
\end{proof}

\begin{lemma}\label{lemma:2}$\Alt(\Omega)\subseteq \langle H,a\rangle$.
\end{lemma}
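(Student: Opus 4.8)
The plan is to set $G=\langle H,a\rangle$ and to prove that $G$ is a \emph{primitive} permutation group on $\Omega$ containing a $3$-cycle; the classical theorem of Jordan (a finite primitive group containing a $3$-cycle contains the alternating group) then yields $\Alt(\Omega)\le G$. Since $|\Omega|=mrs+r-1\ge 13$, there is no small degenerate case to worry about, and this reduction is clean.

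First I would establish that $G$ is transitive on $\Omega$. The subgroup $H$ already has each of $X_0,\ldots,X_{m-1}$ as a single orbit, acting on it as $R\wr S$, while on $X_m$ it acts as $R_{r-1}$ (possibly intransitively); so it suffices to show that $a$ fuses all of these $H$-orbits. Reading off the block action of $a$ recorded before the statement: it maps $Y_{0,0}'$ onto all of $X_m$, so $X_0$ and $X_m$ lie in a common $G$-orbit; for odd $j$ it swaps $Y_{0,j}\leftrightarrow Y_{0,j+1}$, giving links $X_1\!-\!X_2,\,X_3\!-\!X_4,\ldots,X_{m-2}\!-\!X_{m-1}$; and for $i\ge 1$ and even $j$ it swaps $Y_{i,j}\leftrightarrow Y_{i,j+1}$, giving links $X_0\!-\!X_1,\,X_2\!-\!X_3,\ldots,X_{m-3}\!-\!X_{m-2}$. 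Because $m$ is odd and $s\ge 2$, these two families together connect $X_0,\ldots,X_{m-1}$ into one chain, which is joined to $X_m$; hence $G$ is transitive.

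Next I would produce a $3$-cycle. The point is that, although an arbitrary transitive $R$ need contain no short cycle, the vertex $r-1$ is fixed by $a$, so $R(Y_{0,0})$ and its conjugate $R(Y_{0,0})^a$ are two copies of $R$ whose supports $Y_{0,0}$ and $X_m\cup\{r-1\}$ meet in the single point $r-1$, and which together fix $\Omega\setminus(Y_{0,0}\cup X_m)$ pointwise. Writing $\Sigma=Y_{0,0}\cup X_m$ (so $|\Sigma|=2r-1$), I would show that $P=\langle R(Y_{0,0}),R(Y_{0,0})^a\rangle$, generated by two transitive copies of $R$ glued at one point, induces at least $\Alt(\Sigma)$ on $\Sigma$; in particular $G$ contains a $3$-cycle. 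Proving that two transitive groups overlapping in a single point generate at least the alternating group on their union is the first technical point to nail down.

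The main obstacle is primitivity, and here the construction is tailored to defeat the obvious imprimitivity. Indeed $H$ does preserve the partition $\{Y_{i,j}\}\cup\{X_m\}$ coming from the imprimitive action of $R\wr S$, but this partition has blocks of two different sizes ($r$ and $r-1$) and so cannot be a block system for the transitive group $G$; moreover $a$ does not preserve it, since it breaks $Y_{0,0}$ apart at the special point $r-1$. To turn this into a proof I would take a nontrivial block $B\ni r-1$ of size $b$, use that $b\mid|\Omega|$ together with $\gcd(|\Omega|,r)=\gcd(r-1,r)=1$, and exploit that each $R(Y_{i,j})$ is transitive on $Y_{i,j}$ and preserves the block system, forcing every block to meet each $Y_{i,j}$ in a constant number of points dividing $r$; combining this uniformity with the anomalous block $X_m$ of size $r-1$ and with the way $a$ mixes $Y_{0,0}'$ into $X_m$ should force $b\in\{1,|\Omega|\}$. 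Once primitivity is secured, Jordan's theorem applied to the $3$-cycle from $P$ finishes the proof. I expect the block-elimination argument to be the most delicate step, with the generation statement for overlapping transitive groups a close second.
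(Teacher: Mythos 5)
Your overall skeleton (transitivity, then primitivity, then a Jordan-type theorem) matches the paper's, and your transitivity argument is essentially the paper's Claim~1. But there are two genuine gaps. The first and most serious is the step producing a $3$-cycle. You reduce it to the claim that two transitive groups of degree $r$ whose supports meet in a single point generate at least $\Alt(\Sigma)$ on their union $\Sigma$, $|\Sigma|=2r-1$. This is not a routine fact and you do not prove it; worse, it sits exactly at the boundary where the available tools fail. The natural Jordan set here ($Y_{0,0}$, of size $r$) has size $(|\Sigma|+1)/2>|\Sigma|/2$, so Marggraff's theorem as cited in the paper does not apply to $P$ on $\Sigma$; and the configuration ``primitive group of degree $2r-1$ with a Jordan set of size $r$ on which the pointwise stabiliser of the complement acts regularly'' is realised by $\mathrm{PSL}(d,2)$ on the $2^d-1$ points of $\mathrm{PG}(d-1,2)$ (take the complement of a hyperplane), which does not contain the alternating group. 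Ruling out such exceptions for your $P$ requires using the hypothesis $|A\cap B|=1$ in an essential way, and that is real work, not a quotable lemma. The irony is that the detour through $\Sigma$ is unnecessary: $Y_{0,0}$ is already a Jordan set for the whole group on all of $\Omega$, and since $m\geq 3$ and $s\geq 2$ one has $r<|\Omega|/2$, so Marggraff's theorem applies directly to the primitive group on $\Omega$ — this is precisely what the paper does, and it needs no $3$-cycle at all.

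The second gap is primitivity. Your sketch asserts that each $R(Y_{i,j})$ forces ``every block to meet each $Y_{i,j}$ in a constant number of points dividing $r$''; this is not what follows. Since $R(Y_{i,j})$ fixes $\Omega\setminus Y_{i,j}$ pointwise, any block meeting that complement is $R(Y_{i,j})$-invariant, hence (being a union of $R(Y_{i,j})$-orbits) either contains $Y_{i,j}$ or misses it; the remaining blocks are contained in $Y_{i,j}$. So blocks meet $Y_{i,j}$ in $0$, $|B|$ or $r$ points — not a constant. More importantly, ``should force $b\in\{1,|\Omega|\}$'' is not an argument: the delicate case is a block containing a point of $X_m$ and meeting some $X_j$ with $1\leq j\leq m-2$, and the paper has to manufacture the specific element $ax_{\sigma,0,0}a$ (which fixes $X_j$ setwise while moving a point of $X_m$ into $X_0$) to derive a contradiction there, and then separately kill the case where all blocks inside $X_m$ have size dividing both $r-1$ and $r$. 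Neither of these steps is visible in your outline. As it stands the proposal is a plausible plan with the two hardest steps unexecuted, one of which (the generation lemma) I would not expect to go through without essentially reproving a Marggraff-type theorem at its critical threshold.
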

\begin{proof}
Write $V=(\prod_{j=0}^{m-1}\prod_{i=0}^{s-1}R(Y_{i,j}))\rtimes S(\Omega)\subseteq H$. The group $V$ fixes pointwise $X_m$. Furthermore $X_0,\ldots,X_{m-1}$ are the non-trivial orbits of $V$. Now we prove two claims from which the lemma will follow.

\smallskip

\noindent\textsc{Claim~1. }$\langle V,a\rangle$ is transitive on $\Omega$.

\smallskip

\noindent Recall that $X_0,\ldots, X_{m-1}$ are $V$-orbits. If $j\in \{0,\ldots,m-3\}$ is even, then  $a$ maps $r+jrs\in X_j$ to $r+(j+1)rs\in X_{j+1}$, and if $j\in \{1,\ldots,m-2\}$ is odd, then $a$ maps $jrs\in X_j$ to $(j+1)rs\in X_{j+1}$. Therefore $X_0\cup\cdots\cup X_{m-1}$ is contained in the orbit $0^{\langle V,a\rangle}$ of $\langle V,a\rangle$. Finally, as $a$ maps $Y_{0,0}'\subseteq X_0$ to $X_m$, we get that $\langle V,a\rangle$ is transitive on $X_0\cup\cdots\cup X_m=\Omega$.~$\blacksquare$

\smallskip

\noindent\textsc{Claim~2. }$\langle V,a\rangle$ is primitive on $\Omega$.

\smallskip

\noindent We argue by contradiction and we assume that $\langle V,a\rangle$ is imprimitive with a non-trivial system of imprimitivity $\mathcal{B}$. Let $\omega=z+mrs\in X_m$ with $0\leq z\leq r-2$ and $B\in \mathcal{B}$ with $\omega\in B$. Assume that there exists $j\in \{0,\ldots,m-1\}$ with $B\cap X_j\neq \emptyset$.

As $\omega\in B\cap X_m$ and $V$ fixes pointwise $X_m$, we obtain that $B^v=B$ for every $v\in V$. Since $B\cap X_j\neq\emptyset$ and $V$ acts transitively on $X_j$, we obtain that $B$ contains $X_j$. If  $B^a=B$, then $B$ is setwise fixed by $\langle V,a \rangle$, which by Claim~$1$ is transitive, and hence $B=\Omega$, a contradiction. Therefore $B^a\neq B$. Since $a$ fixes pointwise the set $Y_{1,m-1}$ (which is contained in $X_{m-1}$)  and fixes the point $r-1$ (which lies in $X_0$), and since $X_j\subseteq B$, we obtain that $j\neq 0,m-1$. In particular, $1\leq j\leq m-2$. 

Let $\sigma\in R$ with $z^\sigma=r-1$. Note that $ax_{\sigma,0,0}a\in \langle V,a\rangle$. If $j$ is odd, then $$X_j^{ax_{\sigma,0,0}a}=(Y_{0,j+1}\cup (X_{j-1}\setminus Y_{0,j-1}))^{x_{\sigma,0,0}a}=(Y_{0,j+1}\cup (X_{j-1}\setminus Y_{0,j-1}))^{a}=X_j.$$ Similarly, if $j$ is even, then $$X_j^{ax_{\sigma,0,0}a}=(Y_{0,j-1}\cup (X_{j+1}\setminus Y_{0,j+1}))^{x_{\sigma,0,0}a}=(Y_{0,j-1}\cup (X_{j+1}\setminus Y_{0,j+1}))^{a}=X_j.$$
Therefore $ax_{\sigma,0,0}a$ fixes setwise $X_j$. As $X_j\subseteq B$, we have $B^{ax_{\sigma,0,0}a}=B$ and $r-1=\omega^{ax_{\sigma,0,0}a}\in B\cap X_0$, a contradiction.

From the previous contradiction we obtain that $B\cap X_j=\emptyset$ for every $j\in \{0,\ldots,m-1\}$. In particular, every element of $\mathcal{B}$ is either contained in $X_0\cup\cdots \cup X_{m-1}$ or in $X_m$. Let $B_1,\ldots,B_v$ be the blocks of $\mathcal{B}$ contained in $X_m$. We have $X_m=B_1\cup \cdots \cup B_v$, $r-1=v|B_1|$ and $|B_1|$ divides $r-1$. Now $B_1^a\subseteq Y_{0,0}$ is a block of imprimitivity for $R(Y_{0,0})$ on its action on $Y_{0,0}$. Therefore $|B_1|$ divides $|Y_{0,0}|=r$ and hence $|B_1|=1$, a contradiction. Thus  $\langle V,a\rangle$ is primitive.~$_\blacksquare$

\smallskip

Let $G$ be a permutation group on $\Omega$. A subset $\Gamma$ of $\Omega$ is said to be a Jordan set~\cite[Section~$7.4$]{DM} for $G$ if $|\Gamma|>1$ and if there exists a subgroup of $G$ fixing pointwise $\Omega\setminus \Gamma$ and acting transitively on $\Gamma$. 
The $1889$ theorem of B.~Marggraff (see~\cite[Theorem~$7.4$B]{DM}) shows that if $G$ is a primitive permutation group on $\Omega$ with a Jordan set $\Gamma$ with $|\Gamma|<n/2$, then $\Alt(\Omega)\subseteq G$.

The subgroup $R(Y_{0,0})$ of $V$ fixes pointwise $\Omega\setminus Y_{0,0}$ and acts transitively on $Y_{0,0}$. Therefore the set $Y_{0,0}$ is a Jordan set for the primitive group $\langle V,a\rangle$. As $1<|Y_{0,0}|=r<|\Omega|/2$, the theorem of Marggraff yields that $\Alt(\Omega)\subseteq \langle V,a\rangle$.
\end{proof}

\begin{theorem}\label{thm:3}Let $r,s,m,\Omega,H$ and $a$ be as above, $G=\langle H,a\rangle$ and $\Delta=\Cos(G,H,a)$. Then $\Delta$ is a connected $G$-arc-transitive graph of valency $rs$ with $\Alt(\Omega)\subseteq G\subseteq \Sym(\Omega)$. For an arc $(x,y)$ of $\Delta$, we have $G_x^{\Delta(x)}=R\wr S$ (in its imprimitive action of degree $rs$), 
$G_x^{[1]}\cong R^{s(m-1)}\times R_{r-1}$ and 
$G_{x,y}^{[1]}\cong R^{s(m-2)+1}$.
\end{theorem}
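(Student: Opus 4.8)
The plan is to read off the graph-theoretic data from Lemmas~\ref{lemma:1} and~\ref{lemma:2} together with the generalities on coset graphs recalled at the start of this section, and then to compute the two pointwise stabilisers by an explicit analysis of how $a$ permutes the blocks $Y_{i,j}$. First I would settle the ``global'' assertions. Since $G=\langle H,a\rangle$ by definition, $\Delta=\Cos(G,H,a)$ is connected; as $a$ is an involution, $a^{-1}=a\in HaH$, so $\Delta$ is undirected and $G$ acts arc-transitively. By Lemma~\ref{lemma:1} the valency is $|H:H\cap H^a|=rs$, and $G=\langle H,a\rangle\leq\Sym(\Omega)$ contains $\Alt(\Omega)$ by Lemma~\ref{lemma:2}.

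Now take $x=H$ and $y=Ha$; then $x^a=Ha=y$ and $y^a=Ha^2=H=x$, so the involution $a$ interchanges the two ends of the arc $(x,y)$. Here $G_x=H$, and the kernel of the action of $G_x$ on $\Delta(x)\cong H/(H\cap H^a)$ is the core of $H\cap H^a$ in $H$, which is $L$ by Lemma~\ref{lemma:1}. Hence $G_x^{[1]}=L$; the $s(m-1)$ factors $R(Y_{i,j})$ $(1\leq j\leq m-1,\ 0\leq i\leq s-1)$ together with $R(X_m)\cong R_{r-1}$ give $G_x^{[1]}\cong R^{s(m-1)}\times R_{r-1}$, while $G_x^{\Delta(x)}=G_x/G_x^{[1]}=H/L\cong R\wr S$ in its imprimitive degree-$rs$ action, again by Lemma~\ref{lemma:1}.

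Because $a$ is a graph automorphism interchanging $x$ and $y$, conjugation by $a$ carries the action of $G_x$ on $\Delta(x)$ to that of $G_y$ on $\Delta(y)$; hence $G_y^{[1]}=(G_x^{[1]})^a=L^a$ and therefore $G_{x,y}^{[1]}=G_x^{[1]}\cap G_y^{[1]}=L\cap L^a$. Computing this intersection is the heart of the matter. The key observation is that both $L$ and $L^a$ stabilise the partition $\mathcal{P}=\{Y_{i,j}:0\leq i\leq s-1,\ 0\leq j\leq m-1\}\cup\{X_m\}$ and act within each of its parts: for $L$ this is immediate from its generators, and for $L^a$ it holds because every generator $R(Y_{i,j})^a=R(Y_{i,j}^a)$ and $R(X_m)^a=R(Y_{0,0})_{r-1}$ is supported inside a single part of $\mathcal{P}$, by the description of the block action of $a$ recorded before Lemma~\ref{lemma:1}. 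Consequently $L\cap L^a$ is the direct product over the parts $B\in\mathcal{P}$ of the intersections $L|_B\cap L^a|_B$, so it suffices to examine one part at a time.

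The main obstacle is the careful bookkeeping of this block-by-block intersection, which hinges on how $a$ pairs the blocks: for $i=0$ it swaps $Y_{0,0}'\leftrightarrow X_m$ and $Y_{0,j}\leftrightarrow Y_{0,j+1}$ for odd $j$, while for $i\geq1$ it swaps $Y_{i,j}\leftrightarrow Y_{i,j+1}$ for even $j$ and fixes $Y_{i,m-1}$ pointwise (as $m$ is odd). Running through $\mathcal{P}$: on each $Y_{i,0}$ the group $L$ is trivial, since it fixes $X_0$ pointwise; on $X_m$ and on each $Y_{i,1}$ with $i\geq1$ the group $L^a$ is trivial, since these lie in $X_0^a=X_m\cup\{r-1\}\cup\bigcup_{i=1}^{s-1}Y_{i,1}$ computed in the proof of Lemma~\ref{lemma:1}; so none of these parts contributes. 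On each of the remaining parts, namely $Y_{0,1}$ and $Y_{i,j}$ with $0\leq i\leq s-1$ and $2\leq j\leq m-1$, both $L$ and $L^a$ restrict to the full group $R(Y_{i,j})\cong R$, using that $a$ permutes the blocks rigidly so that the relevant conjugate $R(Y_{i,j'})^a$ equals $R(Y_{i,j})$ under the standard identification. These $1+s(m-2)$ blocks have pairwise disjoint supports, so the surviving copies of $R$ commute and form a direct product, giving $G_{x,y}^{[1]}=L\cap L^a\cong R^{s(m-2)+1}$, as required.
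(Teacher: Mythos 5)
Your proposal is correct and follows essentially the same route as the paper: the global claims and $G_x^{[1]}=L$ are read off from Lemmas~\ref{lemma:1} and~\ref{lemma:2}, the problem is reduced to computing $L\cap L^a$, and that intersection is evaluated from the explicit block action of $a$ (your block-by-block bookkeeping is just a slightly more verbose organisation of the paper's direct computation of $L^a$ and its intersection with $L$). No gaps.
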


\begin{proof}
As $G=\langle H,a\rangle$, $\Delta$ is connected, and since $a$ is an involution, $\Delta$ is undirected. From Lemma~\ref{lemma:2}, $\Alt(\Omega)\subseteq G\subseteq \Sym(\Omega)$. 

Since $\Delta$ is $G$-arc-transitive, we may assume that $(x,y)=(H,Ha)$. From Lemma~\ref{lemma:1}, $\Delta$ has valency $rs$, $G_x^{\Delta(x)}\cong R\wr S$ in its imprimitive action of degree $rs$ and (using the notation in the statement of Lemma~\ref{lemma:1}) $L=G_x^{[1]}\cong R^{s(m-1)}\times R_{r-1}$. Furthermore, $G_{x,y}^{[1]}=G_{x}^{[1]}\cap G_{y}^{[1]}=G_{x}^{[1]}\cap G_{x^a}^{[1]}=G_x^{[1]}\cap (G_x^{[1]})^a=L\cap L^a$. Now, using Lemma~\ref{lemma:1} and the definition of $a$, we have
$$L^a=(R(Y_{0,0})_{r-1}\times R(Y_{1,0})\times\cdots \times R(Y_{s-1,0}))\times R(Y_{0,1})\times \left(\prod_{j=2}^{m-1}\prod_{i=0}^{s-1}R(Y_{i,j})\right)$$
and $$L\cap L^a=R(Y_{0,1})\times \left(\prod_{j=2}^{m-1}\prod_{i=0}^{s-1}R(Y_{i,j})\right)\cong R^{s(m-2)+1},$$
and the theorem is proved.
\end{proof}

\thebibliography{13}

\bibitem{BM}\'{A}. Bereczky and A. Mar\'oti. On groups with every normal
  subgroup transitive or semiregular. \emph{J. Algebra} \textbf{319}
  (2008), no. 4, 1733--1751.

\bibitem{CPSS}P. J. Cameron, C. E. Praeger, J. Saxl and G. M. Seitz. On
  the Sims conjecture and distance transitive graphs.
  \textit{Bull. Lond. Math. Soc. } \textbf{15} (1983), 499--506.

\bibitem{DM}J.~D.~Dixon and B.~Mortimer. \textit{Permutation Groups} (Springer-Verlag, New York, 1996).

\bibitem{G}A. Gardiner. Arc-transitivity in graphs.
  \emph{Quat. J. Math. Oxford} \textbf{24} (1973), 399--407. 

\bibitem{KS}K. A. Kearnes and \'A. Szendrei. Collapsing permutation groups.
  \textit{Algebra Universalis} \textbf{45} (2001), 35--51.

\bibitem{K}W. Knapp. On the point stabilizer in a primitive
  permutation group. \emph{Math. Z.} \textbf{133} (1973), 137--168.

\bibitem{PSV2}P.~Poto\v{c}nik, P.~Spiga and G.~Verret. Tetravalent arc-transitive graphs with unbounded vertex-stabilisers. \textit{Bull Austr. Math. Soc}, to appear.

\bibitem{PSV}P.~Poto\v{c}nik, P.~Spiga and G.~Verret. On restricted permutation groups, in preparation.

\bibitem{Cheryl}C.~E.~Praeger. Finite quasiprimitive group actions on graphs and designs. In \textit{Groups  Korea ’98}, Eds: Young Gheel Baik, David L. Johnson, and Ann Chi Kim, (de Gruyter, Berlin and New York, 2000), 319--331.

\bibitem{Sabidussi}G. Sabidussi. Vertex-transitive graphs. \textit{Monatsh. Math.} \textbf{68} (1964) 426--438.

\bibitem{Sims}C. C. Sims. Graphs and finite permutation groups.
  \textit{Math. Z. } \textbf{95} (1967), 76--86.

\bibitem{Suzuki}M. Suzuki. \emph{Group Theory II} (Grundlehren
  Math. Wiss. $248$, Springer, $1989$).

\bibitem{Thompson}J. G. Thompson. Bounds on the orders of maximal
  subgroups. \emph{J. Algebra} \textbf{14} (1970), 101--103. 

\bibitem{VanBon}J. Van Bon. Thompson-Wielandt-like theorems
  revisited. \emph{Bull. London Math. Soc.} \textbf{35} (2003), 30--36.

\bibitem{Weiss}R.~Weiss. $s$-transitive graphs. \emph{Colloq. Math. Soc. J\'anos Bolyai} \textbf{25} (1978), 827--847.

\bibitem{W}R. Weiss. Elations of graphs. \emph{Acta Math. Hungar.}
  \textbf{34} (1979), 101--103.

\bibitem{Wielandt}H. Wielandt. \textit{Subnormal subgroups and permutation
  groups} (Lecture Notes, Ohio State University, Columbus, Ohio, 1971).
\end{document}